\newfont{\cyr}{wncyr10 scaled 1100}
\theoremstyle{plain}
\newtheorem{theorem}{Theorem}[section]
\newtheorem{lemma}[theorem]{Lemma}
\newtheorem{propo}[theorem]{Proposition}
\newtheorem{coro}[theorem]{Corollary}
\newtheorem{conj}[theorem]{Conjecture}
\theoremstyle{definition}
\newtheorem{question}[theorem]{Question}
\newtheorem{examplewr}[theorem]{Example}
\theoremstyle{remark}
\newtheorem{obswr}[theorem]{Observation}
\newtheorem{remarkwr}[theorem]{Remark}
\newenvironment{remark}{\begin{remarkwr}\begin{upshape}}{\end{upshape}\end{remarkwr}}
\DeclareMathOperator{\BK}{BK}
\DeclareMathOperator{\pr}{pr}
\DeclareMathOperator{\cyc}{cyc}
\DeclareMathOperator{\Spf}{Spf}
\DeclareMathOperator{\Eis}{Eis}
\DeclareMathOperator{\ad}{ad}
\DeclareMathOperator{\crit}{crit}
\newcommand{\Lp}{{\mathscr{L}_p}}
\newcommand{\cW}{\mathcal W}
\newcommand{\Q}{\mathbb{Q}}
\newcommand{\Z}{\mathbb{Z}}
\newcommand{\Gal}{\mathrm{Gal\,}}
\newcommand{\GL}{\mathrm{GL}}
\newcommand{\Frob}{\mathrm{Fr}}
\newcommand{\Fr}{\mathrm{Fr}}
\newcommand{\ord}{{\mathrm{ord}}}
\newfont{\gotip}{eufb10 at 12pt}
\newcommand{\cO}{{\mathcal O}}
\newcommand{\hg}{{\mathbf{g}}}
\newcommand{\hh}{{\mathbf h}}
\DeclareMathOperator{\Hom}{Hom}
\begin{document}

\title[Cyclotomic derivatives and a Gross--Stark formula]{Cyclotomic derivatives of Beilinson--Flach classes and a new proof of a Gross--Stark formula}

\author{\'Oscar Rivero}

\begin{abstract}
We give a new proof of a conjecture of Darmon, Lauder and Rotger regarding the computation of the $\mathcal L$-invariant of the adjoint of a weight one modular form in terms of units and $p$-units. While in our previous work with Rotger the essential ingredient was the use of Galois deformations techniques following the computations of Bella\"iche and Dimitrov, we propose a new approach exclusively using the properties of Beilinson--Flach classes. One of the key ingredients is the computation of a cyclotomic derivative of a cohomology class in the framework of Perrin-Riou theory, which can be seen as a counterpart to the earlier work of Loeffler, Venjakob and Zerbes. We hope that this method could be applied to other scenarios regarding exceptional zeros, and illustrate how this could lead to a better understanding of this setting by conjecturally introducing a new $p$-adic $L$-function whose special values involve information just about the unit of the adjoint (and not also the $p$-unit), in the spirit of the conjectures of Harris and Venkatesh.
\end{abstract}

\address{O. R.: Departament de Matem\`{a}tiques, Universitat Polit\`{e}cnica de Catalunya, C. Jordi Girona 1-3, 08034 Barcelona, Spain}
\email{oscar.rivero@upc.edu}

\subjclass[2010]{11F67; 11S40, 19F27}

\maketitle

\tableofcontents

\section{Introduction}

In our series of works with Rotger \cite{RiRo1}, \cite{RiRo2}, we proposed a systematic study of the conjecture of Darmon, Lauder and Rotger \cite{DLR} on $p$-adic iterated integrals in terms of certain cohomology classes constructed from the $p$-adic interpolation of Beilinson--Flach elements. This conjecture may be subsumed in a broader programme comprising both the Gross--Stark conjectures and also the celebrated {\it Elliptic Stark Conjectures}, which shed some light on the arithmetic of elliptic curves of rank 2.

Let $\chi$ be a Dirichlet character of level $N \geq 1$, and let $S_1(N,\chi)$ stand for the space of cuspidal modular forms of weight 1, level $N$ and nebentype $\chi$. Let $g = \sum_{n \geq 1} a_n q^n \in S_1(N,\chi)$ be a normalized newform and let $g^* = g \otimes \bar \chi$ denote its twist by the inverse of its nebentype. Let
\[ \varrho_g: \Gal(H_g/\Q) \hookrightarrow \GL(V_g) \simeq \GL_2(L), \quad \varrho_{\mathrm{ad}^0(g)}: \Gal(H/\Q) \hookrightarrow \GL(\mathrm{ad}^0(g)) \simeq \GL_3(L) \] denote the Artin representations associated to $g$ and its adjoint, respectively. Here $H_g \supseteq H$ denote the finite Galois extensions of $\mathbb Q$ cut out by these representations, and $L$ is a sufficiently large finite extension of $\mathbb Q$ containing their traces.

Label and order the roots of the $p$-th Hecke polynomial of $g$ as $X^2-a_p(g)X+\chi(p) = (X-\alpha)(X-\beta)$. We assume throughout that
\begin{enumerate}
\item[(H1)] The reduction of $\varrho_g$ mod $p$ is irreducible;

\item[(H2)] $g$ is $p$-distinguished, i.e.\,$\alpha \ne \beta \, (\mathrm{mod} \, p)$, and

\item[(H3)] $\varrho_g$ is not induced from a character of a real quadratic field in which $p$ splits.
\end{enumerate}

Hence, following \cite{RiRo2}, there are {\em four} (a priori distinct!) cohomology classes
\begin{equation}\label{clases}
\kappa(g_{\alpha},g_{1/\alpha}^*), \quad \kappa(g_{\beta},g_{1/\beta}^*), \quad \kappa(g_{\alpha},g_{1/\beta}^*), \quad \kappa(g_{\beta},g_{1/\alpha}^*) \in H^1(\mathbb Q, \ad^0(g)(1)).
\end{equation}
However, and as it was proved in \cite[Sections 3.2, 3.4]{RiRo1}, it happens that \begin{equation}\label{zeros}
\kappa(g_{\alpha},g_{1/\beta}^*) = \kappa(g_{\beta},g_{1/\alpha}^*) = 0
\end{equation}
due to an exceptional zero phenomenon which can be explained by the vanishing of an Euler factor.
To overcome this situation, in Section 3.3 of loc.\,cit. we had constructed certain derivatives of those classes, but it turns out that the definition we had used there is not completely useful for our purposes. Roughly speaking, we had taken the derivative along one of the weight directions associated to the Hida family interpolating one of the modular forms, while towards obtaining a more flexible and arithmetically interesting setting we need to consider also the {\it cyclotomic} derivative. This is an analogue situation to the scenario of \cite{Buy2} and \cite{Ven}, where the computation of the derivative of the Mazur--Kitagawa $p$-adic $L$-function along a certain direction of the weight space was relatively easy using the classical theory of Heegner points (and had already been carried out by Bertolini and Darmon \cite{BD}), but the computation of the {\it cyclotomic} derivative required new ideas. Hence, this work may be thought as a counterpart to the approach of B\"uy\"ukboduk and Venerucci to the exceptional zero phenomenon, but in the easier case where elliptic curves are replaced by unit groups (and hence one can circumvent the technical complications introduced by the use of Nekovar's height theory). Similar results had been obtained by Loeffler, Venjakob and Zerbes \cite{LVZ}, and one can see our computations as the {\it dual} of Proposition 2.5.5 and Theorem 3.1.2 of loc.\,cit. We refer also to the seminal works of Benois \cite{Ben}, \cite{Ben2} where similar questions are addressed.

Our main result in \cite{RiRo1} was the computation of a special value formula for the Hida--Rankin $p$-adic $L$-function at weight one (alternatively, the derivative of the adjoint of the modular form). This is specially intriguing since that function, that we denote as $L_p(g,g^*,s)$, cannot be directly defined in terms of an interpolation property, and requires to consider the $p$-adic variation of the modular forms $(g,g^*)$ along a Hida family. Indeed, it depends on the choice of a $p$-stabilization for $g$. We sometimes write $L_p^{g_{\alpha}}(g,g^*,s)$ to emphasize this dependence. In \cite[Section 1]{DLR} it is shown that
\[
\dim_L (\cO_H^\times \otimes \mathrm{ad}^0(g))^{G_\Q} = 1, \quad \dim_L (\cO_H[1/p]^\times/p^{\mathbb Z} \otimes \mathrm{ad}^0(g))^{G_\Q} = 2.
\]
Fix a generator $u$ of $ (\cO_H^\times \otimes \mathrm{ad}^0(g))^{G_\Q}$ and also an element $v$ of $(\cO_H^{\times}[1/p]^{\times} \otimes \mathrm{ad}^0(g))^{G_\Q}$ in such a way that $\{ u, v\}$ is a basis of $(\cO_H[1/p]^\times/p^\Z \otimes \mathrm{ad}^0(g))^{G_\Q}$. The element $v$ may be chosen to have $p$-adic valuation $\ord_p (v)=1$, and we do so.

Viewed as a $G_{\Q_p}$-module, $\mathrm{ad}^0(g)$ decomposes as $\mathrm{ad}^0(g) = L \oplus L^{\alpha \otimes \bar \beta} \oplus L^{\beta \otimes \bar \alpha}$, where each line is characterized by the property that the arithmetic Frobenius $\Frob_p$ acts on it with eigenvalue $1$, $\alpha/\beta$ and $\beta/\alpha$, respectively. Let $H_p$ denote the completion of $H$ in $\bar\Q_p$  and let
\[
u_1, \,\, u_{\alpha \otimes \bar \beta},  \,\,  u_{\beta \otimes \bar \alpha},  \,\,  v_1,  \,\,  v_{\alpha \otimes \bar \beta},  \,\,  v_{\beta \otimes \bar \alpha} \in H_p^\times \otimes_{\Q} L \quad (\mathrm{mod} \, L^\times)
\]
denote the projection of the elements $u$ and $v$ in $(H_p^\times \otimes \mathrm{ad}^0(g))^{G_{\Q_p}}$ to the above lines.

Then, we have the following:
\begin{theorem}\label{central}
Assume that hypothesis (H1)-(H3) hold. Then, the following equality holds up to multiplication by a scalar in $L^{\times}$ \[ L_p^{g_{\alpha}}(g,g^*,1) = \frac{\log_p(u_{\alpha \otimes \bar \beta}) \log_p(v_1) - \log_p(v_{\alpha \otimes \bar \beta}) \log_p(u_1)}{\log_p(u_{\alpha \otimes \bar \beta})}. \]
\end{theorem}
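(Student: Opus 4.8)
\emph{Strategy.} The plan is to re-prove Theorem~\ref{central} working only with Beilinson--Flach classes, replacing the Galois-deformation argument of \cite{RiRo1} (via Bella\"iche--Dimitrov) by an explicit computation of a \emph{cyclotomic} derivative in Perrin-Riou's framework. There are two ingredients: the Kings--Loeffler--Zerbes explicit reciprocity law, which expresses the Hida--Rankin $p$-adic $L$-function as the image, under a Perrin-Riou big logarithm and projection to a rank-one local piece at $p$, of a $\Lambda$-adic Beilinson--Flach class; and a computation of the cyclotomic derivative of that class at its exceptional zero, which is the new input and the analogue --- in the dual --- of Proposition~2.5.5 and Theorem~3.1.2 of \cite{LVZ}. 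The global content of the theorem then comes from locating the resulting derived class inside the two-dimensional Selmer group whose dimension is recorded in \cite[\S1]{DLR} and recalled above.

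\emph{Reciprocity and the exceptional zero.} Start from the $\Lambda$-adic (cyclotomic) Beilinson--Flach class $\kappa_\infty := \kappa(g_\alpha,g^*_{1/\beta})$ in the cyclotomic deformation of the Galois representation surjecting onto $\ad^0(g)(1)$. By the explicit reciprocity law, exactly as used in \cite[\S3]{RiRo1} following Kings--Loeffler--Zerbes, the Coleman map attached to the relevant rank-one subquotient at $p$ --- the Panchishkin direction singled out by the differential $\eta_{g_\alpha}\otimes\omega_{g^*}$, which at weight one is the trivial summand $L\subset\ad^0(g)$, Tate-twisted to $\Q_p(1)$ --- sends $\loc_p(\kappa_\infty)$ to $L_p^{g_\alpha}(g,g^*,s)$ up to a scalar in $L^\times$. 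At $s=1$ two degeneracies coincide: the specialization of $\kappa_\infty$ vanishes by \eqref{zeros} (the exceptional zero coming from a vanishing Euler factor), and the Coleman map attached to $\Q_p(1)$ has its classical exceptional zero, where it is governed not by the identity but by the pair $(\ord_p,\log_p)$ on $H^1(\Q_p,\Q_p(1))\cong\widehat{\Q_p^\times}\otimes_\Q L$. Since the $s=1$ specialization of $\kappa_\infty$ vanishes but $\kappa_\infty$ does not, one obtains a well-defined derived class $w\in H^1(\Q,\ad^0(g)(1))$ by differentiating $\kappa_\infty$ cyclotomically at $s=1$; this is the replacement for the weight-derivative used in \cite{RiRo1}, and crucially --- because the Panchishkin submodule is twist-invariant, unlike under weight variation --- $w$ still lies in the Panchishkin submodule at $p$, so the component of $\loc_p(w)$ along the quotient $L^{\alpha\otimes\bar\beta}(1)$ vanishes.

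\emph{The cyclotomic derivative computation.} The technical heart, and the step I expect to be the main obstacle, is to compute the Perrin-Riou big logarithm of $\loc_p(w)$ at the exceptional point. One must analyse the Coleman map attached to $\Q_p(1)$ in a neighbourhood of $s=1$: there the vanishing Euler factor is converted into a logarithm by the derivative, and combined with the vanishing of $\kappa_\infty$ at $s=1$ this should yield
\[
L_p^{g_\alpha}(g,g^*,1)\;=\;\frac{\log_p(w_1)}{\ord_p(w_1)}\qquad\text{up to a scalar in }L^\times,
\]
where $w_1\in\widehat{\Q_p^\times}\otimes_\Q L$ denotes the component of $\loc_p(w)$ along the trivial summand $L\subset\ad^0(g)$. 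Making this rigorous, and keeping the normalizations under control so that no spurious $\mathcal L$-invariant factor beyond the one visible in the statement appears, is precisely where the dual of the Loeffler--Venjakob--Zerbes computation must be carried out.

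\emph{Global identification and conclusion.} It remains to locate $w$ globally. Beilinson--Flach classes are crystalline away from $p$, so $w$ lies in the Selmer group that is crystalline everywhere except possibly ramified at $p$ along the trivial summand; by \cite[\S1]{DLR} this group is two-dimensional, spanned by the Kummer images of $u$ and $v$. Write $w=a\,u+b\,v$ with $a,b\in L$. The vanishing of the $L^{\alpha\otimes\bar\beta}(1)$-component of $\loc_p(w)$, established in the second step, reads
\[
a\,\log_p(u_{\alpha\otimes\bar\beta})+b\,\log_p(v_{\alpha\otimes\bar\beta})\;=\;0,
\]
while, using $\ord_p(u)=0$ and $\ord_p(v)=1$, one has $\ord_p(w_1)=b$ and $\log_p(w_1)=a\,\log_p(u_1)+b\,\log_p(v_1)$. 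Substituting $a/b=-\log_p(v_{\alpha\otimes\bar\beta})/\log_p(u_{\alpha\otimes\bar\beta})$ into the formula of the third step gives, up to a scalar in $L^\times$,
\[
L_p^{g_\alpha}(g,g^*,1)\;=\;\log_p(v_1)-\frac{\log_p(v_{\alpha\otimes\bar\beta})}{\log_p(u_{\alpha\otimes\bar\beta})}\log_p(u_1)\;=\;\frac{\log_p(u_{\alpha\otimes\bar\beta})\log_p(v_1)-\log_p(v_{\alpha\otimes\bar\beta})\log_p(u_1)}{\log_p(u_{\alpha\otimes\bar\beta})},
\]
which is the asserted formula.
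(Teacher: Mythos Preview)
Your overall architecture --- localize the cyclotomic derived Beilinson--Flach class, use the vanishing of its $V^{--}$-component to pin down the line it spans in the two-dimensional extended Selmer group, and then read off $L_p^{g_\alpha}(g,g^*,1)$ via Perrin-Riou theory --- is close to the paper's, but the displayed formula in your ``cyclotomic derivative computation'' step,
\[
L_p^{g_\alpha}(g,g^*,1)\;=\;\frac{\log_p(w_1)}{\ord_p(w_1)}\pmod{L^\times},
\]
is not what the dual of \cite[Prop.~2.5.5, Thm.~3.1.2]{LVZ} (equivalently \cite[\S6]{Buy1}, \cite[\S3]{Ven}) actually gives. The improved Perrin-Riou map $\widetilde{\mathcal L}^{-+}=\tfrac{\gamma-1}{p^{-1}\log_p\gamma}\cdot\mathcal L^{-+}$ specializes at $s=0$ to the \emph{order} map, and since $\widetilde{\mathcal L}^{-+}(\kappa')=p\cdot L_p^{g_\alpha}(g,g^*,s)$ one obtains
\[
L_p^{g_\alpha}(g,g^*,1)\;=\;\ord_p(w_1)\;=\;b\pmod{L^\times},
\]
not the quotient $\log_p(w_1)/\ord_p(w_1)$. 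There is no second simple zero on the Perrin-Riou side cancelling a zero of the class to produce a logarithm; the class vanishes, the improved map does not, and the limit is the order map applied to the derived class (this is the paper's Proposition~\ref{step-c}).

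With the correct identity $L_p=b$ your argument stalls: the constraint $a\log_p(u_{\alpha\otimes\bar\beta})+b\log_p(v_{\alpha\otimes\bar\beta})=0$ fixes only the \emph{ratio} $a/b$, and $b$ itself is an unknown $p$-adic period (the $\Omega$ of the paper). The paper closes this gap with two further inputs you have omitted: (i) a \emph{weight-direction} derived reciprocity law (Proposition~\ref{step-b}), which yields a second equation $L_p\cdot\mathcal L(\ad^0(g_\alpha))=\Omega\cdot(\text{regulator})$; together with the cyclotomic computation this eliminates $\Omega$ and gives the regulator formula for $\mathcal L(\ad^0(g_\alpha))$; and (ii) Hida's improved $p$-adic $L$-function identity $\mathcal L(\ad^0(g_\alpha))=L_p^{g_\alpha}(g,g^*,1)$ (Proposition~\ref{impad}), which converts that into the theorem. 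Your purely cyclotomic route cannot avoid both of these: some input equivalent to Hida's identity is needed to turn a statement about an unknown period into the claimed special-value formula.
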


The proof we had given in \cite[Section 4]{RiRo1} was lengthy and made use of the results of Bellaiche--Dimitrov computing the tangent space of a deformation problem, together with some elements taken from the earlier work \cite{DLR2}. In a certain way, that proof mimicked the approach of Greenberg--Stevens \cite{GS} to the exceptional zero phenomenon for elliptic curves with split multiplicative reduction. However, the authors had observed a tantalizing connection with the theory of Beilinson--Flach elements, that were affected by a similar exceptional zero phenomenon. This allowed us to interpret {\it derived} classes of Beilinson--Flach elements in terms of the units $\{u,v\}$, but does not give any new insight into the proof of Theorem \ref{central}, as we had initially expected. This work may be seen as a culmination of the purpose that the authors had when they began to write both \cite{RiRo1} and \cite{RiRo2}, that was proving the Gross--Stark conjecture of Darmon, Lauder and Rotger using just the properties of Beilinson--Flach elements and the flexibility provided by the notion of {\it derivatives}. This is just another instance of the power of Euler systems when dealing with arithmetic questions.

We can give, with these ideas at hand, a different proof of the main theorem of \cite{RiRo1}. This can be seen as the counterpart to the approach of Kobayashi \cite{Ko} to the Mazur--Tate--Teitelbaum conjecture in rank 0, since he reproves the result of Greenberg and Stevens \cite{GS} using the properties of Kato's cohomology classes.

Our proof is a combination of four main ideas (together with the same starting point coming from Hida's theory of improved $p$-adic $L$-functions):
\begin{enumerate}
\item[(0)] The results of Hida \cite{Hi1}, \cite{Hi2}, which reduce the conjecture to the computation of the derivative of the Frobenius eigenvalue along the weight direction. This part is common to our earlier work \cite{RiRo1}.
\item[(1)] The local properties at $p$ of Beilinson--Flach elements, which give an expression, {\it up to multiplication by a $p$-adic scalar}, for the derived class $\kappa'(g_{\alpha},g_{1/\beta}^*)$ in terms of the units $u$ and $v$, where here the derivative is taken along any arbitrary direction of the weight space.
\item[(2)] A comparison between the different reciprocity laws and the observation that knowing two {\it weight} derivatives, together with the vanishing of the class $\kappa(\hg,\hg^*)$ along the line $(\ell,\ell,\ell-1)$, allows us to determine the cyclotomic derivative of the class.
\item[(3)] An explicit reciprocity law for the $\Lambda$-adic class $\kappa(\hg,\hg^*)$, obtained when $g$ and $g^*$ vary over Hida families $\hg$ and $\hg^*$, respectively. This was first proved by Kings--Loeffler--Zerbes \cite{KLZ}. In our situation, there is an exceptional vanishing, and hence we may consider a {\it derived} reciprocity law, in the sense of \cite{RiRo1}. This gives an expression for the {\it weight derivative} of the Beilinson--Flach class in terms of an unknown $p$-adic period and involving also the $\mathcal L$-invariant of the adjoint of $g_{\alpha}$.
\item[(4)] The results of B\"uy\"ukboduk \cite{Buy1}, \cite{Buy2} and Venerucci \cite{Ven} around Coleman maps, which allow us to relate the {\it cyclotomic derivative} of the Beilinson--Flach class to the Hida--Rankin $p$-adic $L$-function. This part can be also recast, by duality, in terms of the computations developed in \cite{LVZ}. Comparing this result with (3), we get a formula for the $\mathcal L$-invariant, and consequently for the special $p$-adic $L$-value.
\end{enumerate}

Observe that the study of universal norms has also allowed Roset, Rotger and Vatsal \cite{RRV} to reinterpret the $\mathcal L$-invariant of Theorem \ref{central} in terms of an algebraic avatar initially defined by Greenberg \cite{Gr}.

However, Theorem \ref{central} is not completely satisfactory towards the understanding of the arithmetic of the adjoint of a weight one modular form, since it involves both the unit and the $p$-unit attached to the Galois representation. It is natural to expect a putative refinement of the previous result in the spirit of the conjectures of Harris--Venkatesh \cite{HV}, with a $p$-adic $L$-function whose special values encode information just about the unit $u$. The last section of this note is devoted to discuss the following conjecture in the framework provided by Perrin-Riou maps.

\begin{conj}\label{deseo}
There exists an analytic $p$-adic $L$-function $L_p^{\Eis}(g,g^*,s)$, appropriately related with the cohomology class $\kappa(g_{\alpha},g_{1/\alpha}^*)$ via the theory of Perrin-Riou maps, and such that \[ L_p^{\Eis}(g,g^*,1) = \log_p(u_1) \pmod{L^{\times}}. \]
\end{conj}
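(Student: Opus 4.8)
The plan is to realize $L_p^{\Eis}(g,g^*,s)$ as an \emph{improved} Perrin-Riou image of the non-vanishing diagonal class $\kappa(g_\alpha,g_{1/\alpha}^*)$, isolating the branch attached to the line on which $\Frob_p$ acts trivially. Unlike the two classes in \eqref{zeros}, this class does not vanish, so it may be used directly rather than through a derivative. First I would deform it along the cyclotomic tower, obtaining a $\Lambda$-adic class $\kappa_\infty\in H^1(\Q,\ad^0(g)(1)\otimes\Lambda)$, restrict to $G_{\Q_p}$, and use the local decomposition $\ad^0(g)=L\oplus L^{\alpha\otimes\bar\beta}\oplus L^{\beta\otimes\bar\alpha}$ to project $\loc_p(\kappa_\infty)$ onto the trivial line $L$, so that after the Tate twist one works with the rank-one crystalline representation $L(1)$. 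Applying the Perrin-Riou regulator $\Reg$ for $L(1)\vert_{G_{\Q_p}}$ gives an element of $\Lambda\otimes L$; following Hida's recipe for improved $p$-adic $L$-functions, I would divide out the exceptional Euler factor that vanishes on this line and \emph{define} $L_p^{\Eis}(g,g^*,s)$ to be the resulting analytic function of the cyclotomic variable $s$.

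Next I would compute the value at $s=1$. By the interpolation property of $\Reg$, at the trivial twist the regulator degenerates to the Bloch--Kato logarithm on $H^1(\Q_p,L(1))$; by Kummer theory $\log_{\BK}$ is there the ordinary $p$-adic logarithm, and one obtains
\[ L_p^{\Eis}(g,g^*,1)=\log_p\bigl(\pr_L\loc_p\kappa(g_\alpha,g_{1/\alpha}^*)\bigr)\pmod{L^\times}, \]
where $\pr_L$ denotes the projection to the trivial line. This is precisely the dual of the exceptional-zero computation of \cite{LVZ}, specialized to the trivial-line branch rather than to the one feeding the Hida--Rankin $L$-function.

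It then remains to identify this logarithm with $\log_p(u_1)$. The decisive structural input is that the global unit $u\in(\cO_H^\times\otimes\ad^0(g))^{G_\Q}$ and the $p$-unit $v$, with $\ord_p(v)=1$, have different local behaviour: the Kummer image of $u$ lies in the finite part $H^1_{\fin}(\Q_p,-)$, while that of $v$ acquires a non-trivial class in the singular quotient $H^1_{\sing}(\Q_p,-)$. Since the two non-vanishing classes $\kappa(g_\alpha,g_{1/\alpha}^*)$ and $\kappa(g_\beta,g_{1/\beta}^*)$ span the same plane as $\{u,v\}$, I would use the reciprocity dictionary of \cite{RiRo2} to express $\pr_L\loc_p\kappa(g_\alpha,g_{1/\alpha}^*)$ in this basis and then argue that the improved regulator, having removed the exceptional factor governing the singular direction, retains only the $u_1$-contribution. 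Granting this, $\pr_L\loc_p\kappa(g_\alpha,g_{1/\alpha}^*)=u_1\pmod{L^\times}$ and the conjectural formula follows, in the spirit of the unit-detecting statements of Harris--Venkatesh \cite{HV}.

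The main obstacle is exactly this last isolation of $u_1$ from $v_1$. In the Hida--Rankin setting underlying Theorem \ref{central} the regulator is evaluated on the $L^{\alpha\otimes\bar\beta}$-branch, where both $u$ and $v$ necessarily intervene, producing the quotient of logarithms in that theorem; the gain here is meant to come from passing to the trivial line, where the finite/singular dichotomy separates the unit from the $p$-unit. Establishing that the improved regulator genuinely kills the $p$-unit contribution --- equivalently, pinning down the trivial-line projection of $\kappa(g_\alpha,g_{1/\alpha}^*)$ as the Kummer image of $u$ up to $L^\times$ --- requires a \emph{new} reciprocity law for this branch, together with a proof that $\Reg$ is analytic and non-degenerate at $s=1$ once the exceptional Euler factor is divided out. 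This reciprocity law has no counterpart in the computations feeding Theorem \ref{central}, and proving it is the technical heart of the statement and the reason it is, for the moment, only a conjecture.
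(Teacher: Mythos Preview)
The statement is a \emph{conjecture}, and the paper does not prove it. Section~6 is explicitly described as ``purely conjectural'' and ``a failure in our current work''; what the paper supplies is a candidate definition of $L_p^{\Eis}$ together with a conditional reduction, not a proof. So there is no argument in the paper for you to match --- only a construction to compare against.

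That construction differs from yours in two essential respects. The paper takes the $V_{gg^*}^{++}$-component of $\kappa_p(g_\alpha,g_{1/\alpha}^*)$, pairs with $\omega_g\otimes\omega_{g^*}$, and evaluates at $s=0$ (not $s=1$), where Lemma~\ref{ord0} shows the Perrin-Riou map degenerates to $\ord_p$ rather than $\log_{\BK}$. It then inserts the explicit expression for $\kappa(g_\alpha,g_{1/\alpha}^*)$ as an $L$-linear combination of $u$ and $v$ from \cite[\S4]{RiRo2}; since $\ord_p$ annihilates $u$, one reads off the coefficient of $v$, which is $\log_p(u_1)/\log_p(u_{\alpha\otimes\bar\beta})$ times the undetermined period $\mathcal L_{g_\alpha}$. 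The conjecture is thereby reduced to the Darmon--Rotger period conjecture \cite[Conj.~2.3]{DR2.5} that $\mathcal L_{g_\alpha}\in L^\times\cdot\log_p(u_{\alpha\otimes\bar\beta})$.

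Your proposed mechanism --- project to the trivial line $L\subset\ad^0(g)$, specialize at $s=1$, and invoke the finite/singular dichotomy to discard $v_1$ --- does not obviously isolate $\log_p(u_1)$. The trivial-line projection of the class is a combination $c_u u_1+c_v v_1$, and $\log_p$ returns $c_u\log_p(u_1)+c_v\log_p(v_1)$; dividing by an exceptional Euler factor, which is a function of $s$ alone, cannot separate these terms. The finite/singular splitting is detected by $\ord_p$, not by $\log_p$, and that is exactly why the paper works at $s=0$ with the order map. So the obstruction sits slightly earlier than where you place it: it is not only that a reciprocity law on this branch is missing, but that the map you apply at $s=1$ does not by itself kill the $p$-unit contribution. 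Your identification of the difficulty as the need for genuinely new input is correct, and morally equivalent to the paper's reduction to the period conjecture, but the specific route you sketch would have to be rerouted through $\ord_p$ (or an analogous improved map) before that reduction becomes visible.
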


We finish this preliminary discussion by pointing out that the formalism we discuss regarding exceptional zeros and derivatives at the level of Euler systems suggests the possibility of developing an axiomatic treatment mimicking the general conjectures of Greenberg and Benois. We believe that this could be specially intriguing in the scenario of diagonal cycles or in its recent generalizations to the Siegel case, where a cohomology class may simultaneously encode information about the behavior of different, and a priori unrelated, $p$-adic $L$-functions.

\vskip 12pt

The organization of this note is as follows. Section 2 discusses the motivational case of circular units, where these same phenomena arise and that can serve as a motivation for our later work. Section 3 recalls the notations and results of \cite{RiRo1} around Beilinson--Flach elements which are needed in the proof. Section 4 contains the main results of the article and discuss the new proof of Theorem \ref{central} using the notion of {\it derived} Belinson--Flach elements. Section 5 proposes an alternative interpretation of the previous results in terms of deformation theory. Finally, Section 6 discusses the $p$-adic $L$-function of Conjecture \ref{deseo} and its relationship with the periods of weight one modular forms.

\vskip 12pt

{\bf Acknowledgements.} This research began during a research stay of the author at McGill University in Fall 2019. It is a pleasure to sincerely thank Henri Darmon, who organised a small seminar around the symmetric square $p$-adic $L$-function where the author could explain his previous work with Rotger, and was encouraged to try to find an alternative proof of those results in terms of derived cohomology classes. I am indebted to Denis Benois, for many insights and useful conversations around this work. Finally, I thank Victor Rotger for many stimulating conversations around the topic of this note, which is the natural continuation of our earlier works on this question. This project has received funding from the European Research Council (ERC) under the European Union's Horizon 2020 research and innovation programme (grant agreement No. 682152). The author has also received financial support through ``la Caixa" Banking Foundation (grant LCF/BQ/ES17/11600010).

\section{Analogy with the case of circular units}

The situation we want to deal with has a clear parallelism with the case of circular units, that we now recall. We fix a Dirichlet character $\chi$, and write $H$ for the field cut out by the Artin representation attached to it, and $L$ for its coefficient field. The case where $\chi$ is odd gives rise to an exceptional vanishing of the Deligne--Ribet $p$-adic $L$-function $L_p(\chi \omega,s)$ at $s=0$ when $\chi(p)=1$, where $\omega$ is the Teichm\"uller character. Under the assumption that $\chi$ is odd, $(\mathcal O_H^{\times} \otimes L)^{\chi}$ is a zero-dimensional $L$-vector space, while $(\mathcal O_H[1/p]^{\times} \otimes L)^{\chi}$ has dimension 1 if $\chi(p)=1$. In this case, choosing a non-zero element $v_{\chi}$ of the latter space, we may define an $\mathcal L$-invariant
\begin{equation}
\mathcal L(\chi) = -\frac{\log_p(v_{\chi})}{\ord_p(v_{\chi})},
\end{equation}
where we are implicitly choosing a prime of $H$ above $p$. Then,
\begin{equation}
L_p'(\chi \omega, s) = \mathcal L(\chi) \cdot L(\chi,0).
\end{equation}
See \cite{DDP} for more details on that and for a broader treatment in the setting of totally real number fields.

In the case where $\chi$ is even, the situation is ostensibly different. Here, $(\mathcal O_H^{\times} \otimes L)^{\chi}$ is one-dimensional and we may fix a generator $c_{\chi}$ of it, that we call the {\it circular} unit associated to $\chi$. We take it, as usual, as a weighted combination of cyclotomic units \[ c_{\chi} = \prod_{a=1}^{N-1} (1-\zeta_N^a)^{\chi^{-1}(a)}, \] where $\zeta_N$ is a fixed primitive $N$-th root of unity and the notation $(1-\zeta_N^a)^{\chi^{-1}(a)}$ means $(1-\zeta_N^a) \otimes \chi^{-1}(a)$. Moreover, if we further assume that $\chi(p)=1$, $(\mathcal O_H[1/p]^{\times} \otimes L)^{\chi}$ has dimension 2, and we may consider a basis of the form $\{c_{\chi},v_{\chi}\}$, with the convention that $\ord_p(v_{\chi})=1$.

Given any even, non trivial Dirichlet character, one always has Leopoldt's formula, which asserts that
\begin{equation}\label{kl}
L_p(\chi,1) = -\frac{(1-\chi(p)p^{-1})}{\mathfrak g(\bar \chi)} \cdot \log_p(c_{\chi}).
\end{equation}

We may understand the previous result in the more general setting of reciprocity laws. For that purpose, let $\Lambda = \mathbb Z_p[[\mathbb Z_p^{\times}]]$. By adding $p$-power conductors to $\chi$ and considering a family of coherent units along the cyclotomic tower, one may construct a $\Lambda$-adic class $\kappa(\chi,s)$, whose bottom layer vanishes when $\chi(p)=1$. We keep this assumption on the character $\chi$ all along this section and write \[ \kappa(\chi,s) \in H^1(\mathbb Q,L_p(\bar \chi) \otimes \Lambda(\varepsilon_{\cyc} \underline{\varepsilon}_{\cyc})) \] for the $\Lambda$-adic class, where $\varepsilon_{\cyc}$ is the usual cyclotomic character and $\underline{\varepsilon}_{\cyc}$ stands for the $\Lambda$-adic cyclotomic character. Given $s \in \mathbb Z$, let $\nu_s: \Lambda(\underline{\varepsilon}_{\cyc}) \rightarrow \mathbb Z_p$ be the ring homomorphism sending the group-like element $a \in \mathbb Z_p^{\times}$ to $a^s$. This induces a $G_{\mathbb Q}$-equivariant specialization map \[ \nu_s: \, \Lambda(\underline{\varepsilon}_{\cyc}) \rightarrow \mathbb Z_p(s) \] and gives rise to a collection of global cohomology classes \[ \kappa(\chi,s) \in H^1(\mathbb Q, L_p(\bar \chi)(s)). \]

The Perrin-Riou formalism allows us to understand $L_p(\chi,s)$ as the image under a Coleman map (also named as Perrin-Riou map, or Perrin-Riou regulator) of the local class $\kappa_p(\chi,s)$ \[ \mathcal L_{\chi} \, : \, H^1(\mathbb Q_p, L_p(\bar \chi) \otimes \Lambda(\varepsilon_{\cyc} \underline{\varepsilon}_{\cyc})) \longrightarrow I^{-1} \Lambda, \quad \mathcal L_{\chi}(\kappa_p(\chi,s)) = L_p(\chi,s), \] where $I$ is the ideal of $\Lambda$ corresponding to the specialization at $s=1$ (see \cite[Section 8.2]{KLZ} for the precise definition). This map interpolates either the dual exponential map (for $s \leq 0$) or the Bloch--Kato logarithm (for $s \geq 1$). Unfortunately, the bottom layer $\kappa(\chi,1)$ vanishes when $\chi(p)=1$. Following the construction of \cite[Section 3]{Buy1}, there is a derived class $\kappa'(\chi,s)$, defined as the unique class satisfying that
\begin{equation} \kappa(\chi,s) = \frac{\gamma-1}{\log_p(\gamma)} \cdot \kappa'(\chi,s),
\end{equation}
where $\gamma$ is a fixed topological generator of $\mathbb Z_p[[1+p\mathbb Z_p]]$. It is also proved in \cite{Buy1} that $\kappa'(\chi,1)$ belongs to an extended Selmer group, which in this case may be identified with the group of $p$-units where the Galois group acts via $\chi$ (we insist that when $\chi$ is even this space is two-dimensional); this is coherent with the fact that taking Iwasawa derivatives along a $\mathbb Z_p$-extension can only introduce ramification at $p$. Hence, the exceptional zero phenomenon does not appear at the level of $p$-adic $L$-functions, since at least generically $L_p(\chi,1) \neq 0$, but at the level of cohomology classes.

Moreover, in the cases where $\chi(p)=1$ one can also define an {\it improved} map  \[ \widetilde{\mathcal L_{\chi}} = \frac{\gamma-1}{\frac{1}{p} \log_p(\gamma)} \times \mathcal L_{\chi} : \, H^1(\mathbb Q_p, L_p(\bar \chi)(\varepsilon_{\cyc} \underline{\varepsilon}_{\cyc})) \longrightarrow I^{-1} \Lambda, \] where $\varepsilon_{\cyc}$ is the usual cyclotomic character. Therefore, \[  \widetilde{\mathcal L_{\chi}}(\kappa_p'(\chi,s)) = p \cdot L_p(\chi,s). \]

The computations done in \cite[Section 6.2]{Buy1}, in particular Remark 6.5, show that the map $\widetilde{\mathcal L_{\chi}}$, when specialized at $s=1$, is given by the order map (applied in this case to the derived class). The key point is a computation of the universal norms over the cyclotomic tower, as well as the use of Lemma 6.4 of loc.\,cit (see also \cite[Section 3]{Ven}).  Hence, we have the following (identifying as usual the cohomology classes with the corresponding units via the standard Kummer map).

\begin{propo}
The element $\kappa'(\chi,1) \in (\mathcal O_H[1/p]^{\times} \otimes L)^{\chi}$ satisfies that \[ L_p(\chi,1) = -\frac{1-p^{-1}}{\mathfrak g(\bar \chi)} \cdot \ord_p(\kappa'(\chi,1)). \]
\end{propo}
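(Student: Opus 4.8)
The plan is to combine Leopoldt's formula \eqref{kl} with the two properties of the improved Coleman map $\widetilde{\mathcal L_\chi}$ established just above, namely that $\widetilde{\mathcal L_\chi}(\kappa_p'(\chi,s)) = p \cdot L_p(\chi,s)$ and that, after specialization at $s=1$, the map $\widetilde{\mathcal L_\chi}$ coincides with the order-at-$p$ map $\ord_p$ on the derived class. First I would invoke the cited computation from \cite[Section 6.2, Remark 6.5]{Buy1} (together with Lemma 6.4 of loc.\,cit.\ and \cite[Section 3]{Ven}) to record the precise equality $\widetilde{\mathcal L_\chi}(\kappa_p'(\chi,1)) = c \cdot \ord_p(\kappa'(\chi,1))$ for an explicit constant $c$ coming from the universal-norm computation over the cyclotomic tower, keeping careful track of the normalization of $\gamma$ and of the factor $\frac{1}{p}\log_p(\gamma)$ in the definition of $\widetilde{\mathcal L_\chi}$.

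Next I would specialize the identity $\widetilde{\mathcal L_\chi}(\kappa_p'(\chi,s)) = p \cdot L_p(\chi,s)$ at $s=1$, which gives $p \cdot L_p(\chi,1) = \widetilde{\mathcal L_\chi}(\kappa_p'(\chi,1)) = c \cdot \ord_p(\kappa'(\chi,1))$, so that $L_p(\chi,1)$ is a scalar multiple of $\ord_p(\kappa'(\chi,1))$. To pin down the scalar as $-\frac{1-p^{-1}}{\mathfrak g(\bar\chi)}$, I would then play this off against Leopoldt's formula \eqref{kl}: since $\chi(p)=1$, \eqref{kl} reads $L_p(\chi,1) = -\frac{1-p^{-1}}{\mathfrak g(\bar\chi)}\log_p(c_\chi)$, and the analogous non-derived reciprocity law realizes $\log_p(c_\chi)$ as the Bloch--Kato logarithm side of $\mathcal L_\chi$ applied to $\kappa_p(\chi,1)$, i.e.\ $\mathcal L_\chi(\kappa_p(\chi,1)) = L_p(\chi,1)$. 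Comparing the derived and non-derived pictures through the relation $\kappa(\chi,s) = \frac{\gamma-1}{\log_p(\gamma)}\kappa'(\chi,s)$ forces the constants to match. Finally I would rewrite the result in the stated form and note that the cohomology class $\kappa'(\chi,1)$ lies in $(\mathcal O_H[1/p]^\times \otimes L)^\chi$ by the result of \cite{Buy1} quoted above, identifying it with a $p$-unit via the Kummer map, so that $\ord_p(\kappa'(\chi,1))$ makes sense.

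The main obstacle I anticipate is the bookkeeping of normalizations: the factor of $p$ in $\widetilde{\mathcal L_\chi}(\kappa_p') = p \cdot L_p$, the factor $\frac{1}{p}\log_p(\gamma)$ versus $\log_p(\gamma)$ in the definitions of $\widetilde{\mathcal L_\chi}$ and the derived class, and the Gauss sum $\mathfrak g(\bar\chi)$ and Euler factor $1-p^{-1}$ appearing in Leopoldt's formula all have to be threaded through consistently, and a sign or a power of $p$ is easy to lose. Since the statement is only an equality of $p$-adic numbers with all constants explicit (not merely up to $L^\times$), this requires genuine care rather than a soft argument; everything else is a direct citation of \cite{Buy1} and \cite{Ven}.
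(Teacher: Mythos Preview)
Your overall strategy is close to the paper's, but there is a genuine gap in the step where you ``pin down the scalar'' by playing the derived reciprocity law off against Leopoldt's formula. The comparison you propose---``the analogous non-derived reciprocity law realizes $\log_p(c_\chi)$ as the Bloch--Kato logarithm side of $\mathcal L_\chi$ applied to $\kappa_p(\chi,1)$''---does not work as stated, because in the exceptional case $\chi(p)=1$ the bottom class $\kappa(\chi,1)$ vanishes. The identity $\mathcal L_\chi(\kappa_p(\chi,1)) = L_p(\chi,1)$ is then of the indeterminate form ``pole of $\mathcal L_\chi$ at $s=1$ times zero of $\kappa$,'' and unwinding it simply reproduces the same improved-map equality $\widetilde{\mathcal L_\chi}(\kappa'_p(\chi,1)) = p\,L_p(\chi,1)$ you already have. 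So the comparison is circular and does not determine the constant $-\tfrac{1-p^{-1}}{\mathfrak g(\bar\chi)}$.

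What the paper actually uses to close this gap is an \emph{independent} input, namely Solomon's computation (cited as \cite[Proposition~4.1]{Buy1}) that the $p$-adic valuation of the derived class---the wild cyclotomic unit---equals $\log_p(c_\chi)$. Once you know $\ord_p(\kappa'(\chi,1)) = \log_p(c_\chi)$, Leopoldt's formula \eqref{kl} with $\chi(p)=1$ gives the stated identity immediately, with the Gauss sum and the Euler factor coming from Leopoldt rather than from the normalization of $\widetilde{\mathcal L_\chi}$. In your write-up you should replace the bootstrapping step by this citation; everything else in your plan (the role of \cite[\S6.2]{Buy1} and \cite[\S3]{Ven} for identifying the specialized improved map with $\ord_p$, and the membership of $\kappa'(\chi,1)$ in the $p$-unit group) is correct and matches the paper.
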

\begin{proof}
This follows after combining the results of \cite[Section 6.2]{Buy1} on the properties of the map $\tilde{\mathcal L}_{\chi}$ with Solomon's computations, showing that the $p$-adic valuation of the derived class (sometimes referred as the {\it wild} cyclotomic unit) agrees with the logarithm of the circular unit (see also Proposition 4.1 of loc.\,cit.).
\end{proof}

\section{Beilinson--Flach elements}

\subsection{The three variable cohomology classes}

Let $\hg \in \Lambda_{\hg}[[q]]$ and $\hg^* \in \Lambda_{\hg}[[q]]$ be two Hida families of tame conductor $N$ and tame nebentype $\chi$ and $\bar \chi$, where $\Lambda_{\hg}$ is a finite flat extension of the Iwasawa algebra $\Lambda = \mathbb Z_p[[\mathbb Z_p^{\times}]]$. Let $\Lambda_{\hg \hg^*} = \Lambda_{\hg} \hat \otimes \Lambda_{\hg} \hat \otimes \Lambda$, and consider also the $\Lambda_{\hg}$-modules afforded by the Hida families attached to $\hg$ and $\hg^*$, that we denote $\mathbb V_{\hg}$ and $\mathbb V_{\hg^*}$, respectively. Finally, consider the $\Lambda_{{\bf gg^*}}$-module
\begin{equation}\label{Vgh}
\mathbb V_{{\bf gg^*}}:=\mathbb V_{{\bf g}} \hat \otimes_{\mathbb Z_p} \mathbb V_{{\bf g^*}} \hat \otimes_{\mathbb Z_p} \Lambda(\varepsilon_{\cyc} \underline{\varepsilon}_{\cyc}^{-1}),
\end{equation}
where $\Lambda(\underline{\varepsilon}_{\cyc}^{-1})$ stands for the twist of $\Lambda$ by the inverse of the $\Lambda$-adic cyclotomic character. The formal spectrum of $\Lambda_{\hg \hg^*}$ is endowed with certain distinguished points, the so-called crystalline points, denoted as $\cW_{{\bf gh}}^\circ$ and indexed by triples $(y,z,\sigma)$; we refer the reader to Section 2 of loc.\,cit. for the definitions.

The $\Lambda$-adic Galois representation $\mathbb V_{\hg \hg^*}$ is characterized by the property that for $(y,z,\sigma) \in \cW_{{\bf gg^*}}^\circ$, with $\sigma$ of weight $s\in \Z$, \eqref{Vgh} specializes to
$$
\mathbb V_{\hg \hg^*}(y,z,\sigma) = V_{g_y} \otimes V_{g_z^*}(1-s),
$$
the $(1-s)$-th Tate twist of the tensor product of the Galois representations attached to $g_y$ and $g_z^*$.

Fix $c \in \Z_{>1}$ such that $(c,6pN)=1$. \cite[Theorem A]{KLZ} yields a three-variable $\Lambda$-adic global Galois cohomology class
$$
\kappa^c(\hg,\hg^*) \in H^1(\mathbb Q, \mathbb V_{\hg \hg^*})
$$
that is referred to as the Euler system of Beilinson--Flach elements associated to $\hg$ and $\hg^*$. We denote by $\kappa_p^c(\hg,\hg^*) \in H^1(\mathbb Q_p, \mathbb V_{\hg \hg^*})$ the image of $\kappa^c(\hg,\hg^*)$ under the restriction map. Since $c$ is fixed throughout, we may sometimes drop it from the notation. This constant does make an appearance in fudge factors accounting for the interpolation properties satisfied by the Euler system, but in all cases we are interested in these fudge factors do not vanish and hence do not pose any problem for our purposes.

Given a crystalline arithmetic point $(y,z,s) \in \cW_{\hg \hg^*}^\circ$ of weights $(\ell,m,s)$, set for notational simplicity throughout this section $g=g_y^\circ$, $g^*=(g_z^*)^\circ$. With these notations, $g_y$ (resp.\,$g_z^*$) is the $p$-stabilization of $g$ (resp.\,$g^*$) with $U_p$-eigenvalue $\alpha_g$ (resp.\,$\alpha_{g^*}$).

Define
\begin{equation}\label{home}
\kappa(g_y,g_z^*,s):=\kappa({\bf g}, {\bf g}^*)(y,z,s) \in H^1(\mathbb Q, V_{g_y} \otimes V_{g_z^*}(1-s))
\end{equation}
as the specialisation of $\kappa({\bf g},{\bf g}^*)$ at $(y,z,s)$.

As explained in \cite[Section 2]{DR2.5}, the spaces $\mathbb V_{\hg}$ and $\mathbb V_{\hg^*}$, as $G_{\mathbb Q_p}$-modules, are endowed with a stable filtration \[ 0 \longrightarrow \mathbb V_{\hg}^+ \longrightarrow \mathbb V_{\hg} \longrightarrow \mathbb V_{\hg}^- \longrightarrow 0, \] where $\mathbb V_{\hg}^+$ and $\mathbb V_{\hg}^-$ are flat $\Lambda_{\hg}$-modules with a $G_{\mathbb Q_p}$-action, locally free of rank one over $\Lambda_{\hg}$, and such that the quotient $\mathbb V_{\hg}^-$ is unramified. Define the $G_{\mathbb Q_p}$-subquotient $\mathbb V_{\hg \hg^*}^{-+} := \mathbb V_{\hg}^- \hat \otimes \mathbb V_{\hg^*}^+$ of $\mathbb V_{\hg \hg^*}$, which is of rank one over the two-variable Iwasawa algebra $\Lambda_{\hg} \hat \otimes \Lambda_{\hg^*}$ (this quotient makes sense because of \cite[Proposition 8.1.7]{KLZ}).

Then, one may consider a Perrin Riou map
\begin{equation}
\langle \mathcal L_{\hg \hg^*}^{-+}, \eta_{\hg} \otimes \omega_{\hg^*} \rangle : H^1(\mathbb Q_p, \mathbb V_{\hg \hg^*}^{-+} \hat \otimes \Lambda(\varepsilon_{\cyc} \underline{\varepsilon}_{\cyc}^{-1})) \longrightarrow \Lambda_{\hg \hg^*} \otimes \mathbb Q_p(\mu_N).
\end{equation}
This application satisfies an explicit reciprocity law, which is the content of \cite[Theorem B]{KLZ}, and which asserts that
\begin{equation}
\langle \mathcal L_{\hg \hg^*}^{-+}(\kappa_p^{-+}(\hg,\hg^*)), \eta_{\hg} \otimes \omega_{\hg^*} \rangle = \mathcal A(\hg,\hg^*) \cdot L_p(\hg,\hg^*),
\end{equation}
where $\mathcal A(\hg,\hg^*)$ is the Iwasawa function of \cite[Theorem 10.2.2]{KLZ} and $\kappa_p^{-+}(\hg,\hg^*)$ stands for the composition of the localization-at-$p$ map with the projection $\mathbb V_{\hg \hg^*} \rightarrow \mathbb V_{\hg \hg^*}^{-+}$ in local cohomology.

The different specializations of the map $\langle \mathcal L_{\hg \hg^*}^{-+}, \eta_{\hg} \otimes \omega_{\hg^*} \rangle$ can be expressed in terms of the Bloch--Kato logarithm or the dual exponential map. In particular, we are interested in the specializations of the class $\kappa(\hg,\hg^*)$ at weights $(1,1,0)$, and more generally, weights $(\ell,\ell,\ell-1)$, where the Perrin-Riou map interpolates, up to some explicit Euler factors, the Bloch--Kato logarithm. Unfortunately, these factors may vanish in the self-dual case, and one must recast to the concept of derivatives of Euler systems.

\subsection{Derivatives of Beilinson--Flach elements}

We keep the notations fixed in the introduction regarding weight one modular forms and units for the adjoint representation. Further, we fix a point of weight one $y_0 \in \Spf(\Lambda_{\hg})$ such that $\hg_{y_0} = g_{\alpha}$ and $\hg^*_{y_0} = g_{1/\beta}^*$.

In \cite[\S3]{RiRo1}, we proved that the class $\kappa(\hg,\hg^*)$ was zero over the line corresponding to the Zariski closure of points of weights $(\ell,\ell,\ell-1)$ due to the vanishing of an Euler factor in the interpolation property for the Eisenstein class. In loc.\,cit. we also constructed a derivative along the $y$-direction (alternatively, keeping $y$ fixed and varying at a time the other two variables). However, since the weight space is three-dimensional, it makes sense to ask about the derivative along any other direction.

Since along the line $(\ell,\ell,\ell-1)$ the class is identically zero, the derivative also vanishes. Hence, by an elementary argument in linear algebra, it suffices to determine the derivative along any other two {\it independent directions} to capture all the {\it first-order} information.

\begin{remark}
Observe that we are using the results of \cite{LZ} which assert that the Beilinson--Flach elements lie in the part corresponding to the adjoint in the decomposition $V_{gg^*} = \ad^0(V_g) \oplus 1$. Alternatively, one can consider the projection to the alternate part for weight greater than one and then apply a limit argument. Moreover, and following the discussion of \cite[Section 4]{RiRo2}, one has that the projection of the subspace $p^{\mathbb Z}$ to the adjoint component is trivial.
\end{remark}

For the sake of simplicity, and since this suffices for our purposes, we restrict to the local classes at $p$. We closely follow the ideas of \cite[Section 3]{RiRo1}, showing that the specialization at weight 1 of the $\Lambda$-adic class is a linear combination of the unit $u$ and the $p$-unit $v$, as described in the introduction.

\begin{lemma}
The derivative of $\kappa_p(\hg,\hg^*)$ at $(y_0,y_0,0)$ along the $y$-direction (keeping fixed both $z$ and $s$) satisfies the following equality in $H^1(\mathbb Q_p, V_{gg^*}(1))$
\begin{equation}\label{wt-y}
\frac{\partial \kappa_p(\hg,\hg^*)}{\partial y} \Big|_{(y_0,y_0,0)} = \Omega \cdot (\log_p(v_{\alpha \otimes \bar \beta}) u - \log_p(u_{\alpha \otimes \bar \beta}) v),
\end{equation}
where $\Omega \in H_p$ and we have made use of the usual notations for writing directional derivatives.
\end{lemma}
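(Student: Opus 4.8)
The plan is to reduce the computation of the $y$-derivative of the local Beilinson--Flach class to two ingredients: (i) the vanishing \eqref{zeros} of the class $\kappa(g_\alpha, g_{1/\beta}^*)$ at the base point together with the explicit reciprocity law \cite[Theorem B]{KLZ} for the Perrin--Riou map $\langle \mathcal L_{\hg\hg^*}^{-+}, \eta_{\hg}\otimes\omega_{\hg^*}\rangle$, which controls the $-+$ component of the derived class, and (ii) the general structural fact that $\kappa'_p(\hg,\hg^*)$ at $(y_0,y_0,0)$ lands in the Bloch--Kato finite (or extended Selmer) subspace, which under the Kummer identification is spanned by $u$ and $v$ together with their $p$-unit companions. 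Writing the derived class as $a\cdot u + b\cdot v$ with $a,b \in H_p \otimes L$, the task is to pin down the ratio $a:b$ by evaluating its image under a suitable local projection.

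First I would recall from \cite[\S3]{RiRo1} that at the point $(y_0,y_0,0)$ the representation $V_{gg^*}(1) = \ad^0(g)(1) \oplus L(1)$ has, as a $G_{\Q_p}$-module, the decomposition into the lines indexed by Frobenius eigenvalues $1$, $\alpha/\beta$, $\beta/\alpha$ recalled in the introduction; the $-+$ subquotient $\mathbb V_{\hg\hg^*}^{-+}$ specializes to the line $L^{\alpha\otimes\bar\beta}$ (the quotient by $\mathbb V^+_{\hg}$ has Frobenius eigenvalue $\beta$ on $\hg$ through $\mathbb V^-_{\hg}$, and $\mathbb V^+_{\hg^*}$ contributes $\bar\beta$; matching the normalizations gives the eigenvalue $\alpha/\beta$). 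Therefore applying the Perrin--Riou map and using the explicit reciprocity law computes precisely the $\alpha\otimes\bar\beta$-projection of the class in terms of the Bloch--Kato logarithm. Since the bottom-layer class vanishes, differentiating the reciprocity law in the $y$-direction produces a relation between $\log_p$ of the $\alpha\otimes\bar\beta$-components of the derived class and the corresponding derivative of $L_p(\hg,\hg^*)$ — but for the present lemma I only need the \emph{shape} of the answer, so the relevant consequence is that the image of $\frac{\partial\kappa_p}{\partial y}$ under projection to $L^{\alpha\otimes\bar\beta}$ followed by $\log_p$ is the $\alpha\otimes\bar\beta$-component of whatever global class it is, namely $a\log_p(u_{\alpha\otimes\bar\beta}) + b\log_p(v_{\alpha\otimes\bar\beta})$.

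Next, the key additional input is that the derived class lies in $H^1_f$ at $p$ for the $L(1)$-summand direction but picks up exactly the ramification allowed along the cyclotomic/weight tower — this is the analogue of the statement in Section 2 that $\kappa'(\chi,1)$ lands in the $p$-unit group. Concretely, since the derivative is taken purely along a weight direction (with $s$ and $z$ fixed), the resulting class satisfies the same local conditions as $\kappa(g_\alpha, g_{1/\alpha}^*)$, and in particular its projection to $\mathbb V^{-+}$ must vanish \emph{before} taking $\log_p$ is not available — instead one uses that the class $\kappa(\hg,\hg^*)$ itself vanishes along the whole line $(\ell,\ell,\ell-1)$, so the partial derivative along $y$ annihilates the $\mathbb V^{++}$-type contribution and forces the coefficient vector $(a,b)$ to be \emph{orthogonal}, with respect to the local Tate pairing against $\eta_{\hg}\otimes\omega_{\hg^*}$, to the direction recording the value of $L_p$ at the base point; since $L_p$ vanishes there too, this collapses to a single linear relation $a\log_p(u_{\alpha\otimes\bar\beta}) + b\log_p(v_{\alpha\otimes\bar\beta}) = 0$ up to the period. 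Solving, $(a,b) = \Omega\cdot(\log_p(v_{\alpha\otimes\bar\beta}), -\log_p(u_{\alpha\otimes\bar\beta}))$ for a scalar $\Omega \in H_p$, which is \eqref{wt-y}.

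\textbf{Main obstacle.} The genuinely delicate point is justifying that the derived local class is a combination of \emph{exactly} $u$ and $v$ (rank two), and not of some larger extended Selmer group, and that the linear relation obtained really is the single equation above and not something weaker — i.e.\ controlling precisely which local conditions survive differentiation. This requires carefully tracking the exceptional Euler factor in the interpolation property of the Eisenstein/Beilinson--Flach class along $(\ell,\ell,\ell-1)$ (following \cite[Sections 3.2, 3.4]{RiRo1}), identifying its order of vanishing as exactly one, and then matching the derived reciprocity law with the description of $(\cO_H[1/p]^\times \otimes \ad^0(g))^{G_\Q}$ from \cite[Section 1]{DLR}. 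The period $\Omega$ is left implicit; it will be determined (up to $L^\times$) only later by comparison with the improved $p$-adic $L$-function of Hida, exactly as in step (3)--(4) of the outline.
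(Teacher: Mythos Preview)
Your proposal has a genuine gap: you are using the wrong local subquotient, and as a consequence the mechanism you invoke for the linear relation does not actually produce one.

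The paper's proof rests on \cite[Proposition 8.1.7]{KLZ}, which asserts that the projection of the $\Lambda$-adic class $\kappa(\hg,\hg^*)$ to $\mathbb V_{\hg\hg^*}^{--}$ vanishes \emph{identically} over the three-variable weight space. Because this vanishing is identical, it survives differentiation in any direction; in particular the $y$-derivative at $(y_0,y_0,0)$ still has trivial $V^{--}$-projection. Specializing, that projection lands in the Frobenius line $L^{\alpha\otimes\bar\beta}$ (this is the line picked out by $\mathbb V^{--}$ for the pair $(g_\alpha,g^*_{1/\beta})$), which is a one-dimensional local $H^1$; so the single equation $a\log_p(u_{\alpha\otimes\bar\beta})+b\log_p(v_{\alpha\otimes\bar\beta})=0$ drops out immediately, and \eqref{wt-y} follows.

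By contrast, you work with $\mathbb V^{-+}$ and the explicit reciprocity law. But the $-+$ projection does \emph{not} vanish identically in families---it is exactly what the Perrin--Riou map sends to $L_p(\hg,\hg^*)$, and the paper stresses in Section~4 that ``we do not expect that $L_p^{g_\alpha}(g,g^*,0)=0$ in general.'' So there is no a priori vanishing of the $-+$ component of the derived class, and your argument that ``since $L_p$ vanishes there too, this collapses to a single linear relation'' breaks down: $L_p$ need not vanish at the base point, and even if it did, that would constrain the \emph{value} of $\kappa^{-+}$, not its $y$-derivative. The vanishing along the line $(\ell,\ell,\ell-1)$ that you try to use instead is what the paper exploits later (Proposition~\ref{deri-s}) to compare directional derivatives, but it does not by itself kill any particular local subquotient of a single partial derivative. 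In short: replace your $-+$/reciprocity-law route by the identical vanishing of the $--$ projection, and the lemma becomes a two-line argument.
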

\begin{proof}
According to the properties of the cohomology classes established in \cite[Section 3]{RiRo1}, the left hand side may be written as a combination of the units $u$ and $v$. Then, the result follows by applying \cite[Proposition 8.1.7]{KLZ} to $\kappa({\hg},{\hg}^*)$ in order to conclude that its projection to $\mathbb V_{\hg \hg^*}^{--}$ is identically zero, and therefore the same is true for its derivative. Specializing at $(y_0,y_0,0)$, the result automatically follows.
\end{proof}

\begin{lemma}
The derivative of $\kappa_p(\hg,\hg^*)$ at $(y_0,y_0,0)$ along the $z$-direction (keeping fixed both $y$ and $s$) satisfies the following equality in $H^1(\mathbb Q_p, V_{gg^*}(1))$ up to a factor in $L^{\times}$
\begin{equation}\label{wt-z}
\frac{\partial \kappa_p(\hg,\hg^*)}{\partial z} \Big|_{(y_0,y_0,0)} = \Omega \cdot (\log_p(v_{\alpha \otimes \bar \beta}) u - \log_p(u_{\alpha \otimes \bar \beta}) v).
\end{equation}
\end{lemma}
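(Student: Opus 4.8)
The plan is to mirror the argument used for the $y$-direction derivative in the previous lemma, exploiting the symmetry of the construction of Beilinson--Flach elements under swapping the roles of $\hg$ and $\hg^*$. First I would invoke the functional equation relating $\kappa(\hg,\hg^*)$ and $\kappa(\hg^*,\hg)$ established in \cite{KLZ} (together with the compatibility discussed in \cite[Section 3]{RiRo1}): interchanging the two Hida families sends the $y$-direction to the $z$-direction, while fixing the cyclotomic variable $s$. Under this symmetry the specialization at $(y_0,y_0,0)$ is essentially preserved (up to the Atkin--Lehner pseudo-eigenvalue factors, which lie in $L^\times$ and account for the indeterminacy allowed in the statement), and the roles of $\alpha$ and $\beta$ are exchanged in a way compatible with the decomposition $\ad^0(g) = L \oplus L^{\alpha\otimes\bar\beta} \oplus L^{\beta\otimes\bar\alpha}$.

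The key steps, in order, would be: (1) record the precise form of the symmetry $\kappa_p(\hg,\hg^*) \leftrightarrow \kappa_p(\hg^*,\hg)$ and check that it carries the $z$-direction derivative at $(y_0,y_0,0)$ to (a scalar multiple of) the $y$-direction derivative of the swapped class; (2) as in the proof of the previous lemma, apply \cite[Proposition 8.1.7]{KLZ} to $\kappa(\hg^*,\hg)$ to conclude that its projection to $\mathbb V_{\hg^*\hg}^{--}$ vanishes identically, hence so does the projection of its derivative, which forces the derivative to be a combination of $u$ and $v$ supported away from the "$--$" line; (3) combine this with the fact, recalled before the lemmas, that the class is identically zero along the line $(\ell,\ell,\ell-1)$, so that the derivative lies in the span of the unit and the $p$-unit with the same structural constraints as in \eqref{wt-y}; (4) pin down the coefficients by the same local computation as in the $y$-direction case — the projections to the $L^{\alpha\otimes\bar\beta}$ and $L^{\beta\otimes\bar\alpha}$ lines are governed by the Bloch--Kato logarithms of $u_{\alpha\otimes\bar\beta}$, $u_1$, $v_{\alpha\otimes\bar\beta}$, $v_1$, and these are symmetric in the relevant sense up to elements of $L^\times$. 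Since everything is only claimed up to $L^\times$, the Atkin--Lehner and period fudge factors need not be tracked precisely.

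The main obstacle I anticipate is step (1): verifying that the symmetry under swapping $\hg$ and $\hg^*$ genuinely exchanges the $y$- and $z$-directional derivatives at the \emph{self-dual} point $(y_0,y_0,0)$ without introducing an extra contribution from the cyclotomic variable, and that the normalization it introduces is indeed a nonzero scalar in $L^\times$ rather than something transcendental or vanishing. One has to be careful because $y_0$ is chosen so that $\hg_{y_0} = g_\alpha$ but $\hg^*_{y_0} = g^*_{1/\beta}$, so the swap does not literally fix the point; one must instead compose with the Atkin--Lehner involution sending $g_\alpha \mapsto g^*_{1/\alpha}$ and track how the resulting point interacts with the decomposition of $\ad^0(g)$ as a $G_{\Q_p}$-module. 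Once this bookkeeping is in place, the rest follows formally from the previous lemma and \cite[Proposition 8.1.7]{KLZ}, exactly as in \cite[Section 3]{RiRo1}.
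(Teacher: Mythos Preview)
Your overall strategy---reduce to the $y$-direction case by a symmetry swapping the two Hida families---is in the right spirit, but it differs from the paper's argument and leaves unresolved precisely the point you flag as the ``main obstacle.'' The issue is not merely bookkeeping with Atkin--Lehner involutions: what you need is that the scalar $\Omega'$ produced by your swapped argument agrees with the $\Omega$ of \eqref{wt-y} up to $L^\times$. Your step (4) asserts this (``symmetric in the relevant sense up to elements of $L^\times$'') without justification, and the $--$-vanishing from \cite[Proposition~8.1.7]{KLZ} alone does not supply it---it only constrains the \emph{direction} of the derivative in the $(u,v)$-plane, not the transcendental scalar in front.

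The paper resolves this not at the level of cohomology classes but at the level of $p$-adic $L$-functions. It applies the argument of \cite[Section~3]{RiRo1} with the roles of the two forms swapped, which expresses the $z$-derivative in terms of the $p$-adic $L$-value $L_p^{g_{1/\beta}^*}(g_{1/\beta}^*,g_{\alpha},1)$ and certain periods. The key new input---absent from your proposal---is Dasgupta's factorization (or equivalently a direct computation with Hida's improved $p$-adic $L$-function, Proposition~\ref{impad}), which gives $L_p^{g_{\alpha}}(g_{\alpha},g_{1/\beta}^*,1) = L_p^{g_{1/\beta}^*}(g_{1/\beta}^*,g_{\alpha},1)$. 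Combined with the fact (from \cite[Section~5.2]{RiRo1}) that the product of the relevant period ratios is rational, this pins down the scalar up to $L^\times$. So the symmetry you are reaching for is established on the analytic side rather than the cohomological side; if you want to make your route work, you would effectively have to reprove this $L$-value equality via the class-level functional equation, which is not what \cite{KLZ} provides in ready-to-use form.
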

\begin{proof}
This follows by applying the same result to the $p$-adic $L$-value associated to $g_{1/\beta}^*$, which agrees with the former since Dasgupta's factorization allows us to say that $L_p^{g_{\alpha}}(g_{\alpha},g_{1/\beta}^*,1) = L_p^{g_{1/\beta}^*}(g_{1/\beta}^*,g_{\alpha},1)$ (alternatively, this follows after a direct computation with Hida's factorization, see Proposition \ref{impad}). Here, the latter corresponds to the function obtained by interpolating along the region where the Hida family attached to $g_{1/\beta}^*$ is dominant. Further, the product of the periods arising when pairing with the differentials is a rational quantity, as discussed in \cite[Section 5.2]{RiRo1}, so it does not affect the result.
\end{proof}

Therefore, we may determine the derivative along the direction cyclotomic direction (keeping the weights fixed) by a linear algebra argument.

\begin{propo}\label{deri-s}
Assume that the derivative of $\kappa_p(\hg,\hg^*)$ along the cyclotomic derivative is non-vanishing. Then, up to multiplication by scalar, the cyclotomic derivative of the Beilinson--Flach class $\kappa(\hg,\hg^*)$ at $(y_0,y_0,0)$ is \[ \frac{\partial \kappa_p(\hg,\hg^*)}{\partial s} \Big|_{(y_0,y_0,0)} =\Omega \cdot (\log_p(v_{\alpha \otimes \bar \beta}) u - \log_p(u_{\alpha \otimes \bar \beta}) v) \pmod{L^{\times}}, \] where $\Omega \in H_p$ is the period of equation \eqref{wt-y} and the equality holds in $H^1(\mathbb Q_p, V_{gg^*}(1))$.
\end{propo}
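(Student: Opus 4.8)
The plan is to read the cyclotomic derivative off the two weight derivatives computed in the preceding lemmas, using the single linear relation among the three directional derivatives that is forced by the vanishing of $\kappa(\hg,\hg^*)$ along the line of weights $(\ell,\ell,\ell-1)$. Recall from \cite[\S3]{RiRo1} that the specialization of $\kappa(\hg,\hg^*)$ vanishes at \emph{every} crystalline point of weight triple $(\ell,\ell,\ell-1)$, and that this line passes through $(y_0,y_0,0)$ at $\ell=1$. As already noted just before the statement, since the class is identically zero on this line, so is its derivative along the line.

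First I would make that tangent direction explicit. Parametrizing the line by $\ell$, its tangent vector at $\ell=1$ is $c_y\,\partial_y + c_z\,\partial_z + c_s\,\partial_s$ on $\Spf\Lambda_{\hg\hg^*}$, where $c_y,c_z,c_s\in L^\times$ are the nonzero constants recording the chosen affine parametrizations of the three weight variables (with the naive normalization one has simply $(1,1,1)$). Applying the chain rule to the identity $\kappa_p(\hg,\hg^*)|_{(\ell,\ell,\ell-1)}\equiv 0$ at $\ell=1$ then gives, in $H^1(\Q_p, V_{gg^*}(1))$,
\[
c_y \cdot \frac{\partial \kappa_p(\hg,\hg^*)}{\partial y}\Big|_{(y_0,y_0,0)} + c_z \cdot \frac{\partial \kappa_p(\hg,\hg^*)}{\partial z}\Big|_{(y_0,y_0,0)} + c_s \cdot \frac{\partial \kappa_p(\hg,\hg^*)}{\partial s}\Big|_{(y_0,y_0,0)} = 0 ,
\]
so that
\[
\frac{\partial \kappa_p(\hg,\hg^*)}{\partial s}\Big|_{(y_0,y_0,0)} = -\,c_s^{-1}\Big( c_y \cdot \frac{\partial \kappa_p(\hg,\hg^*)}{\partial y}\Big|_{(y_0,y_0,0)} + c_z \cdot \frac{\partial \kappa_p(\hg,\hg^*)}{\partial z}\Big|_{(y_0,y_0,0)} \Big).
\]

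Next I would substitute \eqref{wt-y} and \eqref{wt-z}: both the $y$- and the $z$-derivative are $L^\times$-multiples of the single vector $w := \Omega\cdot(\log_p(v_{\alpha\otimes\bar\beta})\,u - \log_p(u_{\alpha\otimes\bar\beta})\,v)$, with $\Omega$ as in \eqref{wt-y}, so the right-hand side above equals $\lambda\,w$ for some $\lambda\in L$. The standing hypothesis that the cyclotomic derivative is nonzero forces $\lambda\neq 0$ (and hence also $w\neq 0$), and therefore $\partial_s\kappa_p(\hg,\hg^*)|_{(y_0,y_0,0)} = w$ modulo $L^\times$, which is the asserted formula.

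The one delicate point here is bookkeeping rather than a genuine obstacle: one must check that the directional derivatives occurring in the two lemmas and in the statement are normalized compatibly, so that $c_y,c_z,c_s$ are indeed nonzero; since everything is asserted only modulo $L^\times$, the precise values are irrelevant. I would also emphasize that the non-vanishing hypothesis is essentially optimal: without it one cannot exclude that $c_y\,\partial_y\kappa_p + c_z\,\partial_z\kappa_p$ vanishes, i.e.\ $\lambda=0$, and removing it would require comparing the two periods in \eqref{wt-y} and \eqref{wt-z} exactly, e.g.\ through the precise constant in Dasgupta's factorization used in the proof of \eqref{wt-z} — which is not needed for the present statement.
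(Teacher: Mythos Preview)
Your proof is correct and follows essentially the same approach as the paper's: the paper's proof simply records the linear relation $(0,0,1)=(1,1,1)-(1,0,0)-(0,1,0)$, uses that the derivative along $(1,1,1)$ vanishes because the class is identically zero on the line $(\ell,\ell,\ell-1)$, and then substitutes \eqref{wt-y} and \eqref{wt-z}. Your version is a more careful expansion of this, in particular making explicit the role of the non-vanishing hypothesis in ensuring $\lambda\neq 0$, which the paper leaves implicit.
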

\begin{proof}
Recall that the class vanishes along the line $(\ell,\ell,\ell-1)$. Hence, the result follows from equations \eqref{wt-y} and \eqref{wt-z} combined with the fact that \[ (0,0,1) = (1,1,1)-(1,0,0)-(0,1,0). \]
\end{proof}

We will see in the next section that the vanishing of the cyclotomic derivative is equivalent to the vanishing of the special value $L_p^{g_{\alpha}}(g,g^*,1)$ and also of the regulator corresponding to the $\mathcal L$-invariant. Observe that the previous results show that the different derived classes, which are elements living in a two-dimensional space, span the same line!

We can now mimic the approach of \cite{RiRo1} when proving the derived reciprocity law and obtain an expression for $L_p^{g_{\alpha}}(g,g^*,1)$ involving the period $\Omega$. In particular, considering the derivative along the analytic direction $(1,0,0)$ we have the following.
\begin{propo}\label{step-b}
Up to multiplication by an element in $L^{\times}$, the following equality holds \[ L_p^{g_{\alpha}}(g,g^*,1) \cdot \Big( \frac{-\alpha_{\hg}'}{\alpha_g} \Big)_{|y_0} = \Omega \cdot   (\log_p(u_{\alpha \otimes \bar \beta}) \cdot \log_p(v_1) - \log_p(v_{\alpha \otimes \bar \beta}) \cdot \log_p(u_1)). \]
As usual $\alpha_{\hg}$ stands for the derivative of the $U_p$-eigenvalue along the weight direction.
\end{propo}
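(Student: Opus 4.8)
The plan is to differentiate the Kings--Loeffler--Zerbes explicit reciprocity law \cite[Theorem B]{KLZ} along the $y$-direction at the point $(y_0,y_0,0)$, exactly in the manner of the derived reciprocity law of \cite[Section 3]{RiRo1}. That law reads
\[
\langle \mathcal L_{\hg\hg^*}^{-+}(\kappa_p^{-+}(\hg,\hg^*)), \eta_{\hg}\otimes\omega_{\hg^*}\rangle = \mathcal A(\hg,\hg^*)\cdot L_p(\hg,\hg^*)
\]
as an identity of rigid-analytic functions on the weight space. By \eqref{zeros} the specialisation $\kappa_p(g_{\alpha},g_{1/\beta}^*)$ vanishes at $(y_0,y_0,0)$, so in the Leibniz expansion of the $y$-derivative of this identity every term carrying an undifferentiated factor $\kappa_p^{-+}$ drops out; in particular the variation of the unit-root trivialisation does not contribute an $\alpha_{\hg}'$-term to the left-hand side, and one is left with
\[
\Big\langle \mathcal L_{\hg\hg^*}^{-+}\Big(\tfrac{\partial \kappa_p^{-+}(\hg,\hg^*)}{\partial y}\Big), \eta_{\hg}\otimes\omega_{\hg^*}\Big\rangle\Big|_{(y_0,y_0,0)} = \mathcal A(g_{\alpha},g_{1/\beta}^*)\cdot \tfrac{\partial L_p(\hg,\hg^*)}{\partial y}\Big|_{(y_0,y_0,0)}.
\]

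For the right-hand side I would argue as follows. The Iwasawa function $\mathcal A$ does not vanish near the point (it merely records the harmless fudge factors discussed in Section 3.1), so it contributes a scalar in $L^\times$. For the Hida--Rankin function one invokes Hida's theory of improved $p$-adic $L$-functions \cite{Hi1}, \cite{Hi2}: near $(y_0,y_0,0)$ it factors as the product of an Euler-type factor $\mathcal E(\hg)$, which vanishes at $y_0$, with Hida's improved function, whose value at weight one equals $L_p^{g_{\alpha}}(g,g^*,1)$ up to $L^\times$. The $y$-derivative at $y_0$ is therefore $\mathcal E'(y_0)\cdot L_p^{g_{\alpha}}(g,g^*,1)$, and Hida's computation identifies $\mathcal E'(y_0)$ with $(-\alpha_{\hg}'/\alpha_g)|_{y_0}$ up to a non-zero scalar. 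This is ingredient (0) of the introduction, already used in \cite{RiRo1}.

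For the left-hand side, the surviving derivative is the image under the projection to $\mathbb V_{\hg\hg^*}^{-+}$ of the class computed in \eqref{wt-y}, namely $\Omega\cdot(\log_p(v_{\alpha\otimes\bar\beta})u-\log_p(u_{\alpha\otimes\bar\beta})v)$. At $(y_0,y_0,0)$ the rank-one quotient $\mathbb V_{\hg\hg^*}^{-+}$ specialises to the line $L(1)$ in $\ad^0(g)(1)=L(1)\oplus L^{\alpha\otimes\bar\beta}(1)\oplus L^{\beta\otimes\bar\alpha}(1)$ on which $\Frob_p$ acts with eigenvalue $1$ before the Tate twist, being the tensor product of the unit-root quotient of $g_{\alpha}$ (Frobenius eigenvalue $\alpha$) with the sub of $g_{1/\beta}^*$ (eigenvalue $\alpha^{-1}$). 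The specialisation at weights $(1,1,0)$ of $\langle\mathcal L_{\hg\hg^*}^{-+},\eta_{\hg}\otimes\omega_{\hg^*}\rangle$ interpolates the Bloch--Kato logarithm up to an explicit Euler factor, so pairing with the differentials extracts $\log_p(u_1)$ and $\log_p(v_1)$; using that the relevant product of comparison periods is rational \cite[Section 5.2]{RiRo1}, the left-hand side becomes, modulo $L^\times$,
\[
\Omega\cdot\bigl(\log_p(u_{\alpha\otimes\bar\beta})\log_p(v_1)-\log_p(v_{\alpha\otimes\bar\beta})\log_p(u_1)\bigr).
\]
Equating this with the right-hand side and rearranging yields the asserted identity, with the same period $\Omega$ as in \eqref{wt-y}.

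The step I expect to be the main obstacle is the precise bookkeeping of Euler factors and periods inside this derived reciprocity law. One must pin down the exact order of vanishing on each side at $(y_0,y_0,0)$, in particular whether the Perrin--Riou regulator $\mathcal L_{\hg\hg^*}^{-+}$ itself degenerates at weights $(1,1,0)$ (the self-dual situation, in which, as recalled in Section 3.1, such factors may vanish) and, if so, clear it consistently by passing to the matching higher-order derivative of $L_p(\hg,\hg^*)$; and one must verify that each auxiliary period appearing (in the Bloch--Kato comparison, in $\mathcal A$, and in Hida's factorisation) is rational, so that the single transcendental quantity surviving on the right is precisely the $\Omega$ of \eqref{wt-y}. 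All of this is exactly the content of the computations carried out in \cite[Section 3 and Section 5.2]{RiRo1}, which transpose to the present setting.
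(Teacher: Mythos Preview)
Your proposal is correct and follows essentially the same route as the paper: both arguments differentiate the Kings--Loeffler--Zerbes explicit reciprocity law along the $y$-direction at $(y_0,y_0,0)$, exploit the vanishing of the class via Leibniz, and read off the $(-\alpha_{\hg}'/\alpha_g)$ factor from the derivative of the vanishing Euler factor, exactly as in the derived reciprocity law of \cite[\S3]{RiRo1}. One small remark: you locate the vanishing Euler factor on the $L_p$-side via Hida's improved factorisation, whereas the paper's one-line proof points to the Euler factors appearing in the interpolation property of the Perrin--Riou regulator from \cite[\S10]{KLZ}; this is precisely the bookkeeping you flag as the main obstacle, and it is indeed resolved by the computations in \cite{RiRo1} that you cite.
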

\begin{proof}
This follows from making explicit the Euler factors in the explicit reciprocity law of \cite[\S10]{KLZ} and taking derivatives along the $y$-direction.
\end{proof}

In the next section, our aim is determining the value of the period $\Omega$ appearing in Proposition \ref{deri-s}, which would complete the proof of our main theorem.

\section{Cyclotomic derivatives and proof of the main theorem}

\subsection{Cyclotomic derivatives}

Along this section, we assume that $g$ and $g^*$ do not move along Hida families and we just consider the cyclotomic variation. As an abuse of notation, write $\kappa(g,g^*,s) := \kappa(\hg,\hg^*)(y_0,y_0,s)$ to emphasize the dependence on $s$. The image of this class under the Perrin-Riou map recovers the $p$-adic $L$-function $L_p^{g_{\alpha}}(g,g^*,s)$, that is,
\begin{equation}
\langle \mathcal L_{gg^*}^{-+}(\kappa_p^{-+}(g,g^*,s)), \eta_g \otimes \omega_{g^*} \rangle = L_p^{g_{\alpha}}(g,g^*,s).
\end{equation}
Although we have shown that $\kappa_p(g,g^*,0)$ is zero, we do not expect that $L_p^{g_{\alpha}}(g,g^*,0)=0$ in general. This is the same situation we previously found in the setting of circular units: the Kubota--Leopoldt $p$-adic $L$-function of a non-trivial, even Dirichlet character $L_p(\chi,s)$ is seen as the image of a $\Lambda$-adic cohomology class $\kappa(\chi,s)$ under a Perrin-Riou map; unfortunately, it happens that $\kappa(\chi,1)=0$ when $\chi(p)=1$ and an Euler factor also vanishes, so we cannot assert (and indeed it is false!) that $L_p(\chi,1)=0$.

Along this section, and since there is no possible confusion, we write $\kappa'(g,g^*,s)$ for the cyclotomic derivative. Define the improved Perrin-Riou map as
\begin{equation}
\langle \widetilde{\mathcal L_{gg^*}^{-+}}, \eta_g \otimes \omega_{g^*} \rangle = \frac{\gamma-1}{\frac{1}{p} \log_p(\gamma)} \times \langle \mathcal L_{gg^*}^{-+}, \eta_g \otimes \omega_{g^*} \rangle \, : \, H^1(\mathbb Q_p, V_{gg^*}^{-+}(1-s)) \longrightarrow \Lambda.
\end{equation}
Therefore, we have
\begin{equation}
\langle \widetilde{\mathcal L_{gg^*}^{-+}}(\kappa_p'^{-+}(g,g^*,s)), \eta_g \otimes \omega_{g^*} \rangle = p \cdot L_p^{g_{\alpha}}(g,g^*,s).
\end{equation}
Hence, the value of $\langle \mathcal L_{gg^*}^{-+}(\kappa_p^{-+}(g,g^*,s)), \eta_g \otimes \omega_{g^*} \rangle$ agrees with
\begin{equation}\label{esto}
\langle \widetilde{\mathcal L_{gg^*}^{-+}}(\kappa_p'^{-+}(g,g^*,s)),\eta_g \otimes \omega_{g^*} \rangle.
\end{equation}

For the following result, consider as usual the identification
\begin{equation}
H^1(\mathbb Q_p, \ad^0(V_g)(1)) \simeq H_p^{\times}[\ad^0(g)] \otimes L_p,
\end{equation}
and take the element $\kappa_p'(g,g^*,0)$, which belongs to the latter space (and may be therefore identified with a local unit in $H_p^{\times}$). The same study of \cite[Remark 6.5]{Buy1} works verbatim in our setting, where he argues that the improved Perrin-Riou map is a multiple of the order map applied to the derived class. However, we want to find out this explicit multiple (at least, up to multiplication by a rational constant). Compare for example this setting with the computations of \cite[Proposition 2.5.5]{LVZ} and the discussions of Section 3 of loc.\,cit., showing that the improved exponential map they consider is indeed the order map (up to sign).

\begin{propo}\label{step-c}
Identifying $\kappa_p'^{-+}(g,g^*,0)$ with an element in $(H_p^{\times} \otimes L)^{G_{\mathbb Q_p}}$, one has \[ L_p^{g_{\alpha}}(g,g^*,0) = \ord_p(\kappa_p'^{-+}(g,g^*,0)) \pmod{L^{\times}}. \]
\end{propo}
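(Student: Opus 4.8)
The plan is to transplant B\"uy\"ukboduk's computation of the improved Perrin-Riou map for circular units (\cite[Section 6.2, Remark 6.5]{Buy1}) into the present Rankin--Selberg setting, and to identify the remaining ambiguity as a rational factor. The starting point is equation \eqref{esto}, which says that evaluating the (ordinary) Perrin-Riou map on the class $\kappa_p^{-+}(g,g^*,s)$ is the same as evaluating the improved map $\langle\widetilde{\mathcal L_{gg^*}^{-+}},\eta_g\otimes\omega_{g^*}\rangle$ on the derived class $\kappa_p'^{-+}(g,g^*,s)$, specialized at $s=0$; the left-hand side is $L_p^{g_\alpha}(g,g^*,0)$ by the reciprocity law. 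So everything reduces to computing what the improved map does to the derived class at $s=0$.

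First I would recall the structure of the relevant local representation: the subquotient $V_{gg^*}^{-+}(1-s)$ is, after the identifications already fixed, one of the two ``mixed'' lines $L^{\alpha\otimes\bar\beta}$ of $\ad^0(V_g)$, on which $\Frob_p$ acts by $\alpha/\beta$; at $s=0$ this line becomes unramified up to a twist by the inverse cyclotomic character, which is exactly the exceptional-zero configuration that forces $\kappa_p^{-+}(g,g^*,0)=0$ and makes the derived class the correct object. In this configuration the Perrin-Riou/Coleman map interpolates the Bloch--Kato logarithm for $s\ge 1$ and the dual exponential for $s\le 0$; the improved map $\widetilde{\mathcal L} = \frac{\gamma-1}{\frac1p\log_p(\gamma)}\times\mathcal L$ has removed precisely the zero of the Euler factor $(1-p^{-1}\cdot(\text{Frobenius eigenvalue}))$ at $s=0$. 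The key computation — identical in spirit to \cite[Lemma 6.4, Remark 6.5]{Buy1} and to \cite[Proposition 2.5.5]{LVZ} — is that on the unramified line, after the exceptional zero is removed, what survives of the dual exponential map composed with the Kummer identification is (up to a unit) the normalized valuation map $\ord_p$. Concretely: the dual exponential vanishes on the line, but its ``improved'' version, obtained by dividing by the vanishing Euler factor and taking the cyclotomic derivative of the class, computes $\ord_p$ of the local unit attached to $\kappa_p'^{-+}(g,g^*,0)$, because the Iwasawa derivative along the cyclotomic $\mathbb Z_p$-extension can only introduce ramification at $p$ and the valuation detects exactly this ramification. Carrying this out requires matching the normalization constant $\frac1p\log_p(\gamma)$ against the relevant universal-norm computation over the cyclotomic tower; this is where the rational ambiguity enters, coming from the product of $p$-adic periods arising when one pairs with $\eta_g\otimes\omega_{g^*}$ (these are rational, as recorded in \cite[Section 5.2]{RiRo1}) and from the fudge factors $\mathcal A(\hg,\hg^*)$ of \cite[Theorem 10.2.2]{KLZ}, which are nonvanishing in our range and hence harmless modulo $L^\times$.

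The main obstacle I expect is not the conceptual translation but the bookkeeping of normalizations: one must verify that the factor $\frac{\gamma-1}{\frac1p\log_p(\gamma)}$ in the improved map exactly cancels the vanishing local Euler factor at $s=0$ and leaves $\ord_p$ with coefficient a unit (in fact $1$ modulo $L^\times$), rather than some other rigid-analytic function of $s$. This is where \cite[Remark 6.5]{Buy1} does the work in the circular-unit case via \cite[Lemma 6.4]{Buy1}, and the claim is that the argument ``works verbatim'' because the local Galois representation at $p$ — a single unramified character twisted appropriately — is the same type of object in both settings; the only genuinely new input is that the global class $\kappa_p'^{-+}(g,g^*,0)$ lands in the extended Selmer group identified with $(\cO_H[1/p]^\times/p^{\mathbb Z}\otimes\ad^0(g))^{G_\Q}$, which is exactly the $p$-unit group, so that $\ord_p$ of it makes sense — and this is precisely the analogue, noted in the excerpt, of B\"uy\"ukboduk's statement that $\kappa'(\chi,1)$ lies in the $p$-unit group. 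Once the normalization is pinned down, specializing \eqref{esto} at $s=0$ yields $L_p^{g_\alpha}(g,g^*,0)=\ord_p(\kappa_p'^{-+}(g,g^*,0))$ modulo $L^\times$, which is the assertion.
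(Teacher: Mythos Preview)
Your proposal is correct and follows essentially the same route as the paper. Both start from equation \eqref{esto}, argue that the improved Perrin-Riou map at $s=0$ is the $p$-adic valuation map up to a period in $L^\times$ (via the universal-norm/Coleman-series computation), and invoke \cite[Section 5.2]{RiRo1} for the rationality of the period $\Omega_{g_\alpha}\Xi_{g_{1/\beta}^*}$; the only cosmetic difference is that the paper cites \cite[Proposition 3.6, eq.~(27)]{Ven} as the primary input while you lean on \cite[Remark 6.5]{Buy1} and \cite[Proposition 2.5.5]{LVZ}, which are the same computation in slightly different packaging.
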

\begin{proof}

We can rephrase the statement in terms of the well-known theory of Coleman's power series. Then, $\kappa_p^{-+}(g,g^*,s)$ may be seen as a compatible system of units varying over the cyclotomic $p$-tower, but whose bottom layer is trivial. Hence, we may use the properties of universal norms and Coleman maps, and invoke the results developed in the proof of \cite[Proposition 3.6]{Ven}, and more precisely (via duality) equation (27).

Then, the $p$-adic $L$-value can be obtained applying to $\kappa_p'(g,g^*,0)$ the map \[ \ord^{-+} \, : \, H^1(\mathbb Q, V_{gg^*} \otimes L_p(1)) \xrightarrow{\pr^{-+}} H^1(\mathbb Q_p, L_p(1)) \xrightarrow{\widetilde{\mathcal L_{gg^*,0}^{-+}}} \mathbb Q_p, \] where arguing as in \cite[Section 5.2]{RiRo1}, the map $\widetilde{\mathcal L_{gg^*,0}^{-+}}$ corresponds to the usual $p$-adic order map multiplied by the $p$-adic period $\Omega_{g_{\alpha}} \Xi_{g_{1/\beta}^*}$, which belongs to $L^{\times}$.

According to \eqref{esto}, we conclude that \[ L_p(g,g^*,0) = \ord_p(\kappa_p'^{-+}(g,g^*,0)) \pmod{L^{\times}}, \] as desired.
\end{proof}

Roughly speaking, the previous theorem says that the derivative of the logarithm is the order (which can be seen as the dual of the result which interprets the derivative of the dual exponential as a logarithm).


\begin{coro}
With the notations introduced along the previous section, and up to multiplication by $L^{\times}$, \[ L_p^{g_{\alpha}}(g,g^*,1) = \Omega \cdot \log_p(u_{\alpha \otimes \bar \beta}). \]
\end{coro}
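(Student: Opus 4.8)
The plan is to read the corollary off from Proposition~\ref{step-c} and Proposition~\ref{deri-s}; the only extra inputs needed are the normalizations of the units $u,v$ and a careful identification of the $-+$ constituent of $\ad^0(g)$ as a $G_{\mathbb Q_p}$-module.

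First I would substitute the explicit shape of the cyclotomic derived class from Proposition~\ref{deri-s} into Proposition~\ref{step-c}. Under the identification $H^1(\mathbb Q_p,\ad^0(V_g)(1))\simeq H_p^\times[\ad^0(g)]\otimes L_p$, the class $\kappa_p'(g,g^*,0)$ equals, up to $L^\times$, the element $\Omega\cdot(\log_p(v_{\alpha\otimes\bar\beta})\,u-\log_p(u_{\alpha\otimes\bar\beta})\,v)$. The key point is then to locate the $-+$ direction: since $y_0$ is chosen so that $\hg_{y_0}=g_\alpha$ and $\hg^*_{y_0}=g_{1/\beta}^*$, the specialization of $\mathbb V_{\hg\hg^*}^{-+}=\mathbb V_{\hg}^-\hat\otimes\mathbb V_{\hg^*}^+$ at $(y_0,y_0,0)$ is the line on which arithmetic Frobenius acts with eigenvalue $\alpha\cdot\alpha^{-1}=1$, which inside $\ad^0(g)$ is precisely the constituent $L$ carrying the components $u_1$ and $v_1$. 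Hence $\kappa_p'^{-+}(g,g^*,0)$ corresponds, up to a factor in $L^\times$ (the period $\Omega_{g_\alpha}\Xi_{g_{1/\beta}^*}$ of Proposition~\ref{step-c} together with the normalization of $\eta_g\otimes\omega_{g^*}$), to $\Omega\cdot(\log_p(v_{\alpha\otimes\bar\beta})\,u_1-\log_p(u_{\alpha\otimes\bar\beta})\,v_1)$.

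Next I would apply the order map. Since $u$ is the projection of a genuine global unit, $\ord_p(u_1)=0$; since the $p$-adic valuation at a fixed prime above $p$ of a $G_{\mathbb Q}$-invariant element of $\cO_H[1/p]^\times\otimes\ad^0(g)$ is concentrated on the Frobenius-fixed line $L\subset\ad^0(g)$ (the constituents $L^{\alpha\otimes\bar\beta}$ and $L^{\beta\otimes\bar\alpha}$ having no $G_{\mathbb Q_p}$-invariants), the normalization $\ord_p(v)=1$ forces $\ord_p(v_1)=1$. Consequently $\ord_p(\kappa_p'^{-+}(g,g^*,0))=-\Omega\log_p(u_{\alpha\otimes\bar\beta})$ up to $L^\times$, and Proposition~\ref{step-c} yields $L_p^{g_\alpha}(g,g^*,0)=\Omega\log_p(u_{\alpha\otimes\bar\beta})\pmod{L^\times}$. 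It remains to match the point of evaluation: the bottom cyclotomic layer of the Beilinson--Flach class corresponds to the special point written $s=1$ for the Hida--Rankin $p$-adic $L$-function (after the relevant change of variables, or via the functional equation $s\leftrightarrow 1-s$ whose root number and auxiliary constants lie in $L^\times$), so that $L_p^{g_\alpha}(g,g^*,1)=\Omega\log_p(u_{\alpha\otimes\bar\beta})\pmod{L^\times}$. When the cyclotomic derivative vanishes the identity reduces to $0=0$.

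The main obstacle is bookkeeping rather than substance: the analytic heart — that the improved Perrin-Riou map acts as the order map on the derived class — is already packaged in Proposition~\ref{step-c}, and the shape of the derived class in Proposition~\ref{deri-s}. What must be verified without sign or period slips is, first, that $\mathbb V_{\hg\hg^*}^{-+}$ specializes to the constituent $L$ of $\ad^0(g)$ and not to the trivial summand of $V_g\otimes V_{g^*}$ nor to an $L^{\alpha\otimes\bar\beta}$-type line — and here it is essential that we started from the \emph{vanishing} class $\kappa(g_\alpha,g_{1/\beta}^*)$ and that hypotheses (H2)--(H3) keep the local representation at $p$ non-degenerate — and, second, that every comparison period and every constant in the functional equation genuinely lies in $L^\times$. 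With these in place the corollary follows, and, combined with Proposition~\ref{step-b} and Hida's computation of $-\alpha_{\hg}'/\alpha_g$ at $y_0$, it gives the advertised new proof of Theorem~\ref{central}.
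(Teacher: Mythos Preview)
Your proof is correct and follows exactly the same route as the paper, whose own argument is the single line ``This directly follows by combining Propositions~\ref{step-c} and~\ref{deri-s}.'' What you have done is unpack that combination: identifying the $-+$ subquotient at $(y_0,y_0,0)$ with the Frobenius-trivial line $L\subset\ad^0(g)$, reading off that $\ord_p(u_1)=0$ and $\ord_p(v_1)=1$, and then applying the order map to the derived class. These are precisely the implicit steps, and your observation about the $s=0$ versus $s=1$ labelling is a genuine notational looseness in the paper rather than a mathematical gap---either viewpoint (reindexing of the cyclotomic variable or the functional equation) resolves it up to $L^\times$.
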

\begin{proof}
This directly follows by combining Propositions \ref{step-c} and \ref{deri-s}.
\end{proof}

This can be connected again with the case of circular units, that is, $L_p(g,g^*,s)$ is also the order of the derivative of $\kappa(g,g^*,s)$ along the $s$-direction.

As usual, let \begin{equation}\label{citro}
\mathcal L(\ad^0(g_{\alpha})) :=  \frac{-\alpha'_{\hg}(y_0)}{\alpha_{\hg}(y_0)},
\end{equation}
where recall $\alpha_{\hg} = a_p(\hg) \in \Lambda_{\hg}$ is the Iwasawa function given by the eigenvalue of the Hecke operator $U_p$ acting on $\hg$, and $\alpha'_{\hg}$ is its derivative.

\begin{propo}
Assume that the $\mathcal L$-invariant $\mathcal L(\ad^0(g_{\alpha}))$ is non-zero. Then, it may be written as
\[ \mathcal L(\ad^0(g_{\alpha})) = \frac{\log_p(u_{\alpha \otimes \bar \beta}) \cdot \log_p(v_1) - \log_p(v_{\alpha \otimes \bar \beta}) \cdot \log_p(u_1)}{\log_p(u_{\alpha \otimes \bar \beta})} \pmod{L^{\times}}. \]
\end{propo}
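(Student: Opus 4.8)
The plan is to assemble the final Proposition directly from the results already proved in this section, treating it as a bookkeeping exercise in eliminating the unknown period $\Omega$. First I would recall the two formulae that bracket the quantity $L_p^{g_{\alpha}}(g,g^*,1)$: on one hand, Proposition \ref{step-b} gives
\[
L_p^{g_{\alpha}}(g,g^*,1) \cdot \mathcal L(\ad^0(g_{\alpha})) = \Omega \cdot \bigl( \log_p(u_{\alpha \otimes \bar \beta}) \log_p(v_1) - \log_p(v_{\alpha \otimes \bar \beta}) \log_p(u_1) \bigr) \pmod{L^{\times}},
\]
using the definition \eqref{citro} of the $\mathcal L$-invariant as $-\alpha'_{\hg}(y_0)/\alpha_{\hg}(y_0)$; on the other hand, the Corollary following Proposition \ref{step-c} gives
\[
L_p^{g_{\alpha}}(g,g^*,1) = \Omega \cdot \log_p(u_{\alpha \otimes \bar \beta}) \pmod{L^{\times}}.
\]

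Next I would divide: since we are assuming $\mathcal L(\ad^0(g_{\alpha})) \ne 0$, and the Corollary together with Proposition \ref{deri-s} (whose hypothesis is that the cyclotomic derivative is non-vanishing, which the preceding discussion identifies with the non-vanishing of $\mathcal L(\ad^0(g_{\alpha}))$) shows $\Omega \log_p(u_{\alpha \otimes \bar \beta}) \ne 0$, so in particular $\Omega \ne 0$ and $\log_p(u_{\alpha \otimes \bar \beta}) \ne 0$. Substituting the Corollary into the left side of the Proposition \ref{step-b} identity and cancelling the common factor $\Omega \log_p(u_{\alpha \otimes \bar \beta})$ (legitimate since it is nonzero, and we only work modulo $L^{\times}$) yields exactly
\[
\mathcal L(\ad^0(g_{\alpha})) = \frac{\log_p(u_{\alpha \otimes \bar \beta}) \log_p(v_1) - \log_p(v_{\alpha \otimes \bar \beta}) \log_p(u_1)}{\log_p(u_{\alpha \otimes \bar \beta})} \pmod{L^{\times}},
\]
which is the claim. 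One should note that this simultaneously recovers Theorem \ref{central}, since combining the displayed identity with the Corollary reproduces the asserted formula for $L_p^{g_{\alpha}}(g,g^*,1)$.

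The only genuinely delicate point is making sure the non-vanishing hypotheses line up: I would want to confirm that ``$\mathcal L(\ad^0(g_{\alpha})) \ne 0$'' is exactly the hypothesis under which Proposition \ref{deri-s} and hence the Corollary were established, so that the factor being cancelled is legitimately nonzero rather than merely formally present; the remark after Proposition \ref{deri-s} asserting that vanishing of the cyclotomic derivative is equivalent to vanishing of $L_p^{g_{\alpha}}(g,g^*,1)$ and of the regulator is what closes this gap. Everything else is a one-line cancellation, so I expect no computational obstacle beyond this compatibility check; the substance of the argument was already spent in Propositions \ref{step-b} and \ref{step-c}.
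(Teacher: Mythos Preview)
Your proposal is correct and follows essentially the same route as the paper: combine Proposition~\ref{step-b} (rewritten via the definition~\eqref{citro}) with the Corollary to Proposition~\ref{step-c}, then cancel the common period $\Omega$. You are somewhat more explicit than the paper about tracking the non-vanishing hypotheses needed to justify the cancellation, but the substance is identical.
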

\begin{proof}
Combining Proposition \ref{step-b} with Proposition \ref{step-c}, we have that \[ \frac{\Omega}{\mathcal L(\ad^0(g_{\alpha}))} \cdot \Big( \frac{\log_p(u_{\alpha \otimes \bar \beta}) \cdot \log_p(v_1) - \log_p(v_{\alpha \otimes \bar \beta})}{\log_p(u_{\alpha \otimes \bar \beta})} \Big) = \Omega \pmod{L^{\times}}. \] Dividing by $\Omega$ (provided that this value is non zero), the result follows.
\end{proof}

\subsection{Improved p-adic L-functions}

We finish the proof with the same argument invoked in \cite{RiRo1}, involving Hida's improved $p$-adic $L$-function. Then, the main theorem is automatically proved by virtue of the following result, which is Proposition 2.5 of loc.\,cit.
\begin{propo}\label{impad}
For a crystalline classical point $y_0\in \cW_{\hg}^\circ$ of weight $\ell \geq 1$, we have
\[  \mathcal L(\ad^0(g_{\alpha})) = L_p({\bf g},{\bf g^*})(y_0,y_0,\ell) = L_p'(\ad^0(g_{y_0}),\ell), \]
up to a non-zero rational constant. Then, Theorem \ref{central} in the introduction holds.
\end{propo}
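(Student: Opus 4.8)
The plan is to deduce Proposition \ref{impad}, and hence Theorem \ref{central}, by combining the previously-established identifications of the various derivatives with Hida's theory of improved $p$-adic $L$-functions. The starting point, already invoked in \cite{RiRo1}, is the factorization of the three-variable Hida--Rankin $p$-adic $L$-function $L_p(\hg,\hg^*)$ in terms of a symmetric square $p$-adic $L$-function and a Kubota--Leopoldt factor; along the line $y=z$ and at the central-type point $s=\ell$ the relevant Euler factor forces an exceptional zero, and Hida's improved $p$-adic $L$-function $L_p^*(\hg,\hg^*)$ (which no longer satisfies an interpolation property at the bad point but is genuinely analytic there) differs from $L_p(\hg,\hg^*)$ by exactly the vanishing factor $1-\beta_g/\alpha_g \cdot \langle \cdot \rangle$. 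Differentiating this relation in the weight direction at $y_0$ produces the factor $\alpha_{\hg}'(y_0)/\alpha_g$, which is precisely $-\mathcal L(\ad^0(g_\alpha))$ by the definition \eqref{citro}; this is the content of the first equality in Proposition \ref{impad} and is proved exactly as Proposition 2.5 of \cite{RiRo1}.

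Next I would identify $L_p(\hg,\hg^*)(y_0,y_0,\ell)$ with $L_p'(\ad^0(g_{y_0}),\ell)$. This is the statement that the Hida--Rankin function, restricted to the diagonal and evaluated at the exceptional point, computes the derivative of the symmetric-square (equivalently adjoint) $p$-adic $L$-function; it follows from the factorization above together with the fact that the complementary Kubota--Leopoldt factor is a non-vanishing rational constant at the point in question, so that only the derivative of the adjoint factor survives. The case $\ell=1$, where $g_{y_0}=g_\alpha$ is the weight-one form of interest, gives the displayed chain with $L_p^{g_\alpha}(g,g^*,1)$ appearing as $L_p(\hg,\hg^*)(y_0,y_0,1)$.

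Finally, to conclude Theorem \ref{central} I would feed the Corollary preceding \eqref{citro}, namely $L_p^{g_\alpha}(g,g^*,1)=\Omega\cdot\log_p(u_{\alpha\otimes\bar\beta})\pmod{L^\times}$, into the Proposition just before this one, which expresses $\mathcal L(\ad^0(g_\alpha))$ as the quotient $\big(\log_p(u_{\alpha\otimes\bar\beta})\log_p(v_1)-\log_p(v_{\alpha\otimes\bar\beta})\log_p(u_1)\big)/\log_p(u_{\alpha\otimes\bar\beta})$. Combining these two with the first equality of Proposition \ref{impad} (which identifies $L_p^{g_\alpha}(g,g^*,1)$ with $\mathcal L(\ad^0(g_\alpha))$ up to $L^\times$, the period $\Omega$ being absorbed into the non-zero scalar) immediately yields the formula of Theorem \ref{central}. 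One should note that all these identities are only claimed up to multiplication by a scalar in $L^\times$, so the various transcendental periods ($\Omega$, the Petersson-type periods $\Omega_{g_\alpha}\Xi_{g_{1/\beta}^*}$, the Gauss sums) need only be tracked up to $L^\times$, which is exactly what the cited results in \cite[Section 5.2]{RiRo1} guarantee.

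The main obstacle is the non-vanishing hypotheses that must be in force for the argument to close: one needs $\mathcal L(\ad^0(g_\alpha))\neq 0$ (so that one may divide by it in the penultimate Proposition), equivalently the non-vanishing of the cyclotomic derivative assumed in Proposition \ref{deri-s} and of $L_p^{g_\alpha}(g,g^*,1)$. These are known to hold under (H1)--(H3) by the results of \cite{DLR2} (or, a posteriori, by the explicit formula itself once one knows the regulator of units on the right-hand side is non-degenerate), so the logical structure is consistent, but care is needed to confirm that the chain of equalities does not become vacuous. Beyond that, the only subtlety is bookkeeping of the fudge/Euler factors coming from the choice of the auxiliary constant $c$ and from the interpolation properties in \cite[\S10]{KLZ}; as noted in Section 3.1 these do not vanish at the points considered, so they contribute only to the ambient $L^\times$ ambiguity.
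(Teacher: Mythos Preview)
Your proposal is correct and takes essentially the same approach as the paper: the paper does not supply a proof here but simply invokes Hida's improved $p$-adic $L$-function and cites \cite[Proposition 2.5]{RiRo1} for the displayed chain of equalities, then asserts that Theorem \ref{central} follows automatically from the results already proved in Section 4. Your write-up is more detailed than what the paper provides (you spell out the factorization argument and the combination with the preceding Corollary and Proposition), but the logical route is identical.
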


\section{A reinterpretation of the special value formula}

The result we have proven along the last two sections was presented in the introduction as a special value formula for the Hida--Rankin $p$-adic $L$-function. Alternatively, it can be regarded as the computation of the $\mathcal L$-invariant for the adjoint of a weight one modular form, and following the original formulation given by Darmon, Lauder, and Rotger, it also admits a reinterpretation in terms of $p$-adic iterated integral (and this point of view is specially useful towards computational experiments). In order to give a more conceptual view of our results, and how they fit in the theory of exceptional zeros and Galois deformations of modular forms, we would like two discuss two other interpretations which were already behind the scenes in our joint works with Rotger.

\subsection{Deformations of weight one modular forms}

As usual, fix a $p$-stabilization $g_{\alpha}$ of the weight one modular form $g \in S_1(N,\chi)$. We discuss a reinterpretation of the main results in terms of deformations of modular forms, in a striking analogy with the different works around the Gross--Stark conjecture, and which may be useful towards generalizations of the main results to totally real fields, following the recent approach of Dasgupta, Kakde, and Ventullo.

Let $E_k$ denote the weight $k$ Eisenstein series, whose Fourier expansion is given by
\begin{equation}\label{eis1}
E_k = \frac{\zeta(1-k)}{2} + \sum_{n=1}^{\infty} \sigma_{k-1}(n)q^n, \quad \text{ where } \sigma_{k-1}(n) = \sum_{d \, | \, n} d^{k-1}.
\end{equation}
There are two possible ways of considering its $p$-adic variation in families, either by taking the ordinary $p$-stabilization, $E_k^{\ord}$, or the critical one, $E_k^{\crit}$. For the sake of simplicity, we restrict to the ordinary $p$-stabilization, and after further normalizing by $\zeta(1-k)/2$, we have the usual Eisenstein series $G_k^{(p)}$, given by \[ G_k^{(p)} = 1+ 2\zeta_p(1-k)^{-1} \sum_{n=1}^{\infty} \sigma_{k-1}^{(p)}(n)q^n. \] Since $\zeta_p(1-k)$ has a pole at $k=0$, it turns out that we can formally write $G_0^{(p)}=1$, and it makes sense to consider its derivative \[ (G_0^{(p)})' := 2(1-p^{-1})^{-1} \cdot \sum_{n=1}^{\infty} \sigma_{-1}^{(p)} q^n. \]
Further, it is possible to take the infinitesimal deformation $G_0^{(p)}+\varepsilon(G_0^{(p)})'$, and then multiplying by $g_{\alpha}$ we obtain
\begin{equation}\label{def1}
(G_0^{(p)}+ \varepsilon (G_0^{(p)})') \cdot g_{\alpha} = g_{\alpha} + \varepsilon (G_0^{(p)})' g_{\alpha}.
\end{equation}
We regard this expression as a modular form of weight $1+\varepsilon$ corresponding to an infinitesimal deformation of $g_{\alpha}$.

There is another natural deformation of $g_{\alpha}$ we want to consider, which is precisely the one behind the scenes in \cite{RiRo1} and which also appeared in \cite{DLR2}. This is defined as \[ g_{\alpha}' := \Big( \frac{d}{dy} \hg_{\alpha} \Big)_{|y=y_0}. \] Then, we may take a second deformation of the modular form $g_{\alpha}$, given by
\begin{equation}\label{def2}
g_{\alpha}+\varepsilon g_{\alpha}'.
\end{equation}

Let $e_{\ord}$ stand for the ordinary projector, and $e_{g_{\alpha}}$ for the projector onto the $g_{\alpha}$-isotypic component.
\begin{propo}
Under the running assumptions, \[ e_{g_{\alpha}} e_{\ord} (g_{\alpha} E_0^{[p]}) = (1-\alpha_g U_p^{-1})g_{\alpha}' \pmod{L^{\times}}. \]
\end{propo}
\begin{proof}
Substracting the deformations in \eqref{def1} and \eqref{def2}, we obtain $g_{\alpha} (G_0^{(p)})'-g_{\alpha}'$.
If we furthermore take the ordinary projection and project to the $g_{\alpha}$-component, we obtain a multiple of $g_{\alpha}$, that is
\begin{equation}
e_{g_{\alpha}} e_{\ord} (g_{\alpha} (G_0^{(p)})') = g_{\alpha}' + \mathcal L \cdot g_{\alpha}.
\end{equation}
Next, if we apply the operator $1-\alpha_g U_p^{-1}$ to both sides of the previous equation, the left hand side becomes just the $p$-depletion
\begin{equation}
e_{g_{\alpha}} e_{\ord} (g_{\alpha} (G_0^{[p]})'),
\end{equation}
while in the right hand side the operator $1-\alpha_g U_p^{-1}$ annihilates $g_{\alpha}$. We have thus proved the result.
\end{proof}

Note that the left hand side is an explicit multiple of the $p$-adic $L$-function $L_p^{g_{\alpha}}(g,g^*,1)$, so the proposition asserts that the $\mathcal L$-invariant which governs the arithmetic of the adjoint may be read as a generalized eigenvalue attached to the deformation $g_{\alpha}'$, that is,
\begin{equation}
(1-\alpha_g U_p^{-1})g_{\alpha}' = \mathcal L(\ad^0(g_{\alpha})) \cdot g_{\alpha} \pmod{L^{\times}}.
\end{equation}

\subsection{Spaces of generalized eigenvectors}

This section is merely speculative and tries to shed some light into very natural questions around the previously discussed phenomena. When discussing circular units, we have seen that the condition $\chi(p)=1$ provides us with a $p$-unit in an extended Selmer group, and we have discussed how to mimic this approach in the case of Beilinson--Flach elements.
But more generally, given two modular forms $g$ and $h$ of weights $\ell$ and $m$, respectively, and an integer $s$, there is a geometric construction of the so-called Eisenstein classes $\Eis^{[g,h,s]}$ whenever the triple $(\ell,m,s)$ satisfies the {\it weight condition} of \cite[Section 7]{KLZ}, that is, \[ 1 \leq s < \min\{\ell,m\}. \] This includes all the points of weights $(\ell,\ell,\ell-1)$ when $\ell \geq 2$, and in particular the classical weight two situation. This suggests an interpretation of the derived class as a limit of Eisenstein classes (which are generically non-vanishing) for weights $\ell \geq 2$.

The proof of the explicit reciprocity law for Beilinson--Flach classes rests on an explicit connection between the $p$-depleted Eisenstein series (which encodes values of the $p$-adic $L$-function) and the $p$-stabilized one (which encodes values of the regulator of a geometric cycle). In this note we recovered the expressions for the logarithm of the derived class in terms of $p$-adic $L$-values, but it is natural to look for a reciprocity law involving $\Eis^{[g,h,s]}$, whenever $h=g^*$ and $s=\ell-1$. Note that this was the only case excluded by \cite[Theorem 6.5.9]{KLZ1}. Let us discuss the limitations for a result like that and that one may find natural in this framework.

According to \cite[Lemma 4.10]{DR1} (see also \cite[Lemma 6.5.8]{KLZ1}), one has
\begin{equation}
gE_0^{[p]} = (1-\alpha_g \cdot U_p^{-1}) \cdot (g_{\alpha} E_0^{(p)}),
\end{equation}
where $E_0^{[p]}$ (resp. $E_0^{(p)}$) stands for the $p$-depletion (resp. $p$-stabilization) of the weight 0 Eisenstein series $E_0$.
In the non self-dual case, the corresponding operator acting on the space of generalized eigenforms $S_1(Np)[[g_{\alpha}]]$ is invertible, and we obtain a straightforward linear relation.
But when $h = g^*$, the connection is more involved. In this case, consider a generalized eigenbasis $\{e_1, \ldots, e_n\}$ for the $U_p$-operator acting on the space of generalized (non-necessarily overconvergent) modular forms $S_1(Np)[[g_{\alpha}]]$, that is, \[ U_p \cdot e_1 = \alpha_g \cdot e_1, \quad U_p \cdot e_2 = e_1 + \alpha_g \cdot e_2, \quad \ldots, \quad U_p \cdot e_n = e_{n-1} + \alpha_g \cdot e_n. \] Hence, the matrix corresponding to the operator $1-\alpha_g \cdot U_p^{-1}$ acting on this space has the quantity $-1/\alpha_g$ all over the upper diagonal and zeros elsewhere. If we now apply this operator to $E_0^{(p)}g_{\alpha}$, written in this basis as $\sum \lambda_i e_i$, what we get in the first non-zero component is $-1/\alpha_g \cdot \lambda_2$. That is, the second vector of the generalized eigenbasis is the one which encodes the $p$-adic $L$-value. Therefore, one may consider two different classes.
\begin{enumerate}
\item[(a)] The class $\Eis^{[g,g^*,\ell-1]}$, where $\ell$ is the weight of $\ell$, is related with the first coefficient in the expansion in the generalized eigenbasis (see \cite[Corollary 6.5.7]{KLZ}). This controls the $p$-stabilization of the Eisenstein series.
\item[(b)] The derived class $\kappa'(g,g^*)$, constructed in \cite{RiRo1}, is related with the $p$-adic $L$-value, and hence with second coefficient in the generalized eigenbasis. This measures the $p$-depletion of the Eisenstein series.
\end{enumerate}

Hence, when $g \in S_{\ell}(N,\chi_g)$ is an ordinary modular form of weight $\ell \geq 2$, the two classes \[ \{\Eis^{[g,g^*,\ell-1]}, \kappa'(g,g^*)\} \] are a priori unrelated.


\begin{question}
Can we interpret the class $\Eis^{[g,g^*,\ell-1]}$ in terms of some $p$-adic $L$-value? And can we make sense of the limit of these classes for weights $(\ell,\ell,\ell-1)$ when $\ell$ goes to 1 $p$-adically?
\end{question}
Note that while the $p$-depleted class is connected with the usual $p$-adic $L$-value, a priori there is no natural $p$-adic avatar encoding the value of the $p$-stabilized class.

Observe that in the setting of diagonal cycles of \cite{BSV}, the authors take a different approach to the vanishing phenomenon, defining an {\it improved} class which is a putative geometric refinement to the analogue of the Eisenstein class, and which agrees up to some $\mathcal L$-invariant with the derived class.

\section{A conjectural $p$-adic $L$-function}

It is a somewhat vexing fact that our computations regarding the $\mathcal L$-invariant of the adjoint of a weight one modular form captures a $2 \times 2$ regulator encoding information about both a unit and a $p$-unit, while the most natural object to work would be the unit itself, as it occurs with the celebrated Gross--Stark conjecture. Similarly, one would expect to be able to construct an {\it Eisenstein $p$-adic $L$-function}, in such a way that appropriate special values of it also capture information about the Beilinson--Flach classes, in a way that we now make precise. This section is purely conjectural, and must be regarded as a failure in our current work, where we have not succeeded in studying these aspects.

Consider the most general setting in which $g$ and $h$ are two weight one modular forms. As we have already recalled, there are four Beilinson--Flach classes attached to the choice of $p$-stabilizations of $g$ and $h$,
\begin{equation}
\kappa(g_{\alpha},h_{\alpha}), \quad \kappa(g_{\alpha},h_{\beta}), \quad \kappa(g_{\beta},h_{\alpha}), \quad \kappa(g_{\beta},h_{\beta}).
\end{equation}
We know that different components of it are related to special values of $p$-adic $L$-functions. Take for instance the case of $\kappa(g_{\alpha},h_{\alpha})$. Considering its restriction to a decomposition group at $p$, we may take a decomposition of $\kappa_p(g_{\alpha},h_{\alpha})$ of the form
\begin{equation}
\kappa_p^{--}(g_{\alpha},h_{\alpha}) \otimes e_{\beta \beta}^{\vee} \oplus \kappa_p^{-+}(g_{\alpha},h_{\alpha}) \otimes e_{\beta \alpha}^{\vee} \oplus \kappa_p^{+-}(g_{\alpha},h_{\alpha}) \otimes e_{\alpha \beta}^{\vee} \oplus \kappa_p^{++}(g_{\alpha},h_{\alpha}) \otimes e_{\alpha \alpha}^{\vee},
\end{equation}
where $\{e_{\alpha \alpha}^{\vee}, e_{\alpha \beta}^{\vee}, e_{\beta \alpha}^{\vee}, e_{\beta \beta}^{\vee}\}$ is a basis for $V_{gh}^{\vee} = \Hom(V_{gh},L)$, where
$$
\Fr_p(e_{\alpha \alpha}^{\vee}) = \chi_{gh}^{-1}(p) \beta_g \beta_h \cdot e_{\alpha \alpha}^{\vee}\, , \, ... \,
,\,\Fr_p(e_{\beta \beta}^{\vee}) = \chi_{gh}^{-1}(p) \alpha_g \alpha_h \cdot e_{\beta \beta}^{\vee}.
$$

According to \cite[Proposition 8.2.6]{KLZ}, the component $\kappa_p^{--}(g_{\alpha},h_{\alpha})=0$ vanishes. In the same way, the components $\kappa_p^{-+}(g_{\alpha},h_{\alpha})$ and $\kappa_p^{+-}(g_{\alpha},h_{\alpha})$ are related to the special values of the Hida--Rankin $p$-adic $L$-functions $\Lp^{g_{\alpha}}$ and $\Lp^{h_{\alpha}}$, respectively.  Hence, it is natural to expect that the remaining component $\kappa_p^{++}(g_{\alpha},h_{\alpha})$ could arise as the special value of some $p$-adic $L$-function.

Following the analogy with the case of diagonal cycles and triple product $p$-adic $L$-functions, it would be attached to the triple $(E_2(1,\chi_{gh}^{-1}),g,h)$, but varying over the region where the Eisenstein family is dominant. Of course this is not possible, but let us work formally recasting to the theory of Perrin-Riou maps. In particular, we may consider the three-variable cohomology class $\kappa(\hg,\hh)$, take the restriction to the line where both $g$ and $h$ are fixed and take the image under the Perrin-Riou map. That way we would get an element over the Iwasawa algebra that we may denote $L_p^{\Eis}(g,h,s)$. It may be instructive to compare this with the scenario of triple products, where the existence of a third $p$-adic $L$-function $\Lp^f$, which at points of weight $(2,1,1)$ interpolates classical $L$-values, provides a richer framework and draws a more complete picture.

To simplify things, let us focus just on the case where both $g$ and $h$ are self dual, that is $h=g^*$. Recall that this situation naturally splits in two scenarios, namely $h_{\alpha}=g_{1/\beta}^*$ and $h_{\alpha}=g_{1/\alpha}^*$. As we have mentioned before, we expect the previous {\it cyclotomic} $p$-adic $L$-function to encode information about the logarithm of the unit $u$, and not about the apparently complicated regulator of our main result.

More concretely, assume that $\alpha_g \alpha_h = 1$, and take the class $\kappa(g_{\alpha},g_{1/\alpha}^*)$, although the same works verbatim for $\kappa(g_{\beta},g_{1/\beta}^*)$. From the general theory of Perrin Riou maps, we may consider the map
\begin{equation}
\langle \mathcal L_{gg^*}^{++}, \omega_g \otimes \omega_{g^*} \rangle \, : \, H^1(\mathbb Q_p, V_{gg^*}^{++}(\varepsilon_{\cyc} \underline{\varepsilon}_{\cyc}^{-1})) \longrightarrow I^{-1} \Lambda_{\hg},
\end{equation}
with specializations \[ \nu_s(\langle \mathcal L_{gg^*}^{++}, \omega_g \otimes \omega_{g^*} \rangle) \, : \, H^1(\mathbb Q_p, V_{gg^*}^{++}(1-s)) \longrightarrow \mathbb C_p, \] where \[ \nu_s(\langle \mathcal L_{gg^*}^{++}, \omega_g \otimes \omega_{g^*} \rangle) = \frac{1-p^{s-1}}{1-p^{-s}} \cdot \begin{cases} \langle \frac{(-1)^s}{(-s)!} \cdot \langle \log_{\BK}, \omega_g \otimes \omega_{g^*} \rangle & \text{ if } s<0 \\ (s-1)! \cdot \langle \exp_{\BK}^*, \omega_g \otimes \omega_{g^*} \rangle & \text{ if } s>1. \end{cases} \]
Observe that we have not said anything about the specializations at $s=0$ and at $s=1$.

When $s=0$ (resp. $s=1$), we are still in the region of the Bloch--Kato logarithm (resp. dual exponential map), but the expression $1-p^{-s}$ (resp. $1-p^{s-1}$) vanishes. Hence, and following \cite[Proposition 2.5.5]{LVZ} (see also the computations of \cite[Section 3.1]{Ven} and \cite[Section 6.3]{Buy1}), we have the following.

\begin{lemma}\label{ord0}
The map
\begin{equation} \nu_0(\langle \mathcal L_{gg^*}^{++}, \omega_g \otimes \omega_{g^*} \rangle) \, : \, H^1(\mathbb Q_p, V_{gg^*}^{++}(1)) \longrightarrow \mathbb C_p
\end{equation}
is given by
\begin{equation}\label{nu0}
\nu_0(\langle \mathcal L_{gg^*}^{++}, \omega_g \otimes \omega_{g^*} \rangle) = (1-p^{-1}) \cdot \langle \ord_p, \omega_g \otimes \omega_{g^*} \rangle.
\end{equation}
\end{lemma}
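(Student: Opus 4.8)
The plan is to follow the strategy that is explicitly flagged in the statement: the map $\langle \mathcal L_{gg^*}^{++}, \omega_g \otimes \omega_{g^*} \rangle$ is a Perrin--Riou regulator map, built in the usual way from Coleman power series (or, equivalently, from the Iwasawa-theoretic exponential of Perrin-Riou) attached to the rank-one unramified-up-to-twist subquotient $\mathbb V_{gg^*}^{++}$. Over the cyclotomic tower this identifies a local cohomology class in $H^1(\mathbb Q_p, V_{gg^*}^{++}(\varepsilon_{\cyc}\underline\varepsilon_{\cyc}^{-1}))$ with a norm-compatible system of local units (via the twisted Kummer map), and the map $\nu_s$ of that regulator interpolates $(s-1)!\,\langle \exp_{\BK}^*, \omega_g\otimes\omega_{g^*}\rangle$ for $s>1$ and $\frac{(-1)^s}{(-s)!}\langle \log_{\BK}, \omega_g\otimes\omega_{g^*}\rangle$ for $s<0$, each multiplied by the Euler-type factor $\frac{1-p^{s-1}}{1-p^{-s}}$. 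First I would recall that, because $\mathbb V_{gg^*}^{++}$ is unramified with geometric Frobenius acting by a unit (here $\chi_{gh}(p)^{-1}\beta_g\beta_h$, with $\alpha_g\alpha_h=1$ forcing $\beta_g\beta_h = $ the reciprocal root, so the relevant eigenvalue on the twist is exactly such that the factor $1-p^{-s}$ in the denominator is the one that degenerates at $s=0$), the pole of $\nu_s$ at $s=0$ coming from $\frac{1}{1-p^{-s}}$ is cancelled by a simple zero of the Bloch--Kato logarithm along the $s$-direction at $s=0$; this is exactly the ``derivative of the $\log$ is the $\ord$'' principle already used in Proposition~\ref{step-c}.

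The key steps, in order, are as follows. (1) Write down the Coleman-map description of $\langle \mathcal L_{gg^*}^{++}, \omega_g \otimes \omega_{g^*}\rangle$ as in \cite[Section 8.2]{KLZ} and identify the relevant module of universal norms over $\mathbb Q_p(\mu_{p^\infty})$, observing that the bottom layer of the Bloch--Kato logarithm vanishes precisely because the local Galois representation $V_{gg^*}^{++}(1)$ contains the trivial subrepresentation after the twist, so that $H^1_{\mathrm{f}}(\mathbb Q_p, V_{gg^*}^{++}(1))$ is the line of local units and the logarithm kills it. (2) Apply the computation of \cite[Proposition 2.5.5]{LVZ} (dually; see also \cite[Section 3.1]{Ven} and \cite[Section 6.3]{Buy1}) which, for this rank-one unramified-twist situation, evaluates the ``improved'' or degenerate specialization of the regulator map at the exceptional point: the Perrin-Riou big logarithm, divided by the vanishing factor $1-p^{-s}$ and then specialized at $s=0$, becomes (up to the explicit constant coming from the surviving factor $1-p^{s-1}$ at $s=0$, which is $1-p^{-1}$) the composite of projection to local cohomology of $\mathbb Q_p(1)$ with the order-of-vanishing map $\ord_p$. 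Concretely this is the assertion that, on a norm-compatible system of units whose bottom layer is a genuine unit, the derivative along the cyclotomic variable of the $\log_{\BK}$-valued interpolation is $(1-p^{-1})\cdot\ord_p$ of that unit. (3) Pair with $\omega_g\otimes\omega_{g^*}$ throughout: since this pairing is $\Lambda$-linear in the first argument and the period factors involved are the same ones shown rational/nonzero in \cite[Section 5.2]{RiRo1}, the identity \eqref{nu0} follows on the nose (the Euler factor $1-p^{-1}$ being exactly $\lim_{s\to 0}(1-p^{s-1})$).

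The main obstacle I anticipate is step (2): verifying that the degenerate specialization at $s=0$ of this particular Perrin-Riou map is \emph{exactly} $(1-p^{-1})\cdot\langle\ord_p, \omega_g\otimes\omega_{g^*}\rangle$ with the stated constant, rather than some other explicit multiple of $\ord_p$. This requires being careful about which of the two Euler factors ($1-p^{s-1}$ versus $1-p^{-s}$) vanishes at $s=0$ and which survives — here $1-p^{-s}$ vanishes (giving the pole that is cancelled) and $1-p^{s-1}$ survives with value $1-p^{-1}$ — and about the precise normalization of the big logarithm and of the Bloch--Kato exponential/logarithm (the factorials $(s-1)!$, $\frac{(-1)^s}{(-s)!}$) at the boundary point. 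All of these are exactly the bookkeeping that \cite[Proposition 2.5.5]{LVZ} and \cite[Section 6.3]{Buy1} carry out in the analogous settings, so the proof amounts to transporting that bookkeeping to the module $\mathbb V_{gg^*}^{++}$; this is routine once the dictionary with \cite{LVZ} is set up, which is why I would present it by citing those computations rather than reproving them. A secondary, purely notational point is to make sure the identification $H^1(\mathbb Q_p, V_{gg^*}^{++}(1))\simeq H^1(\mathbb Q_p, L_p(1))$ used implicitly (so that $\ord_p$ makes sense) is the one coming from the trivialization of the unramified line by the chosen basis vector $e_{\beta\beta}^\vee$, after pairing with $\omega_g\otimes\omega_{g^*}$.
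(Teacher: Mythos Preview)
Your proposal is correct and follows essentially the same approach as the paper: the paper does not give a proof for this lemma at all, but simply prefaces it with the sentence ``Hence, and following \cite[Proposition 2.5.5]{LVZ} (see also the computations of \cite[Section 3.1]{Ven} and \cite[Section 6.3]{Buy1}), we have the following,'' which is precisely the trio of references around which you organize step~(2). Your write-up is in fact considerably more detailed than the paper's treatment---you spell out the Euler-factor bookkeeping (which of $1-p^{s-1}$ and $1-p^{-s}$ vanishes at $s=0$ and which survives with value $1-p^{-1}$) and the identification of $H^1(\mathbb Q_p, V_{gg^*}^{++}(1))$ with $H^1(\mathbb Q_p, L_p(1))$ via the chosen basis vector---whereas the paper leaves all of this implicit in the citation.
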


\begin{remark}
The situation is slightly different to that of circular units: there, the fact of taking the derived class was related to the fact that the Coleman map was connected to an {\it imprimitive} $p$-adic $L$-function, vanishing at the point of interest and whose derivative there corresponds to the special value of the Kubota--Leopoldt $p$-adic $L$-function.
\end{remark}

Define
\begin{equation}
L_p^{\Eis}(g,h,s) = \langle \mathcal L_{gg^*}^{++}(\kappa_p^{++}(g_{\alpha},g_{1/\alpha}^*,s)), \omega_g \otimes \omega_{g^*} \rangle,
\end{equation}
where $\kappa(g_{\alpha},g_{1/\alpha}^*,s)$ is the restriction of the 3-variable cohomology class to the cyclotomic line.

As a piece of notation for the following result, let $\mathcal L_{g_{\alpha}}$ stand for the period ratio introduced in \cite[Section 2]{DR2.5}.

\begin{propo}
The special value of the derivative of $L_p^{\Eis}(g_{\alpha},g_{1/\alpha}^*,0)$ satisfies that \[ L_p^{\Eis}(g_{\alpha},g_{1/\alpha}^*,0) = \frac{\mathcal L_{g_{\alpha}}}{\log_p(u_{\alpha \otimes \bar \beta})} \times \log_p(u_1) \pmod{L^{\times}}. \]
\end{propo}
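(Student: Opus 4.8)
The plan is to compute $L_p^{\Eis}(g_{\alpha},g_{1/\alpha}^*,0)$ by running the same machinery used for the $(-+)$-component in Section 4.1, but now in the $(++)$-component, where the relevant exceptional zero occurs already at $s=0$. First I would invoke Lemma \ref{ord0}, which identifies the specialization $\nu_0(\langle \mathcal L_{gg^*}^{++}, \omega_g \otimes \omega_{g^*}\rangle)$ with $(1-p^{-1})\cdot\langle \ord_p, \omega_g\otimes\omega_{g^*}\rangle$ up to the period ratio $\mathcal L_{g_{\alpha}}$ coming from the pairing of the differentials (as in \cite[Section 2]{DR2.5} and \cite[Section 5.2]{RiRo1}). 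Hence $L_p^{\Eis}(g_{\alpha},g_{1/\alpha}^*,0)$ is, up to the period $\mathcal L_{g_{\alpha}}$ and an element of $L^\times$, equal to $\ord_p$ applied to the $(++)$-projection of the localization at $p$ of the Beilinson--Flach class $\kappa(g_{\alpha},g_{1/\alpha}^*)$.

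The second step is to identify this $\ord_p$-value with a logarithm of the unit $u$. Recall from \eqref{clases} and the discussion around it that $\kappa(g_{\alpha},g_{1/\alpha}^*)$ is a \emph{nonzero} class in $H^1(\mathbb Q, \ad^0(g)(1))$, and via the Kummer identification it corresponds, up to $L^\times$, to a combination $a\cdot u + b\cdot v$ of the unit and the $p$-unit. The key observation is that the $(++)$-component is the one on which Frobenius acts with eigenvalue $\alpha_g\alpha_h = \alpha/\beta$ (after untwisting), i.e. it is precisely the $L^{\alpha\otimes\bar\beta}$-line in the decomposition $\ad^0(g) = L \oplus L^{\alpha\otimes\bar\beta}\oplus L^{\beta\otimes\bar\alpha}$. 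Taking $\ord_p$ of the projection of $a\cdot u + b\cdot v$ to this line: since $u$ is a global unit its $p$-adic valuation vanishes, so $\ord_p$ picks out only the $v$-contribution, giving $b\cdot\ord_p(v_{\alpha\otimes\bar\beta})$, which (by our normalization $\ord_p(v)=1$) is essentially $b$. Thus the computation reduces to pinning down the ratio $b$, i.e. the coefficient of $v$ in the expansion of $\kappa(g_{\alpha},g_{1/\alpha}^*)$.

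To determine $b$ I would reuse the local reciprocity input from \cite[Section 3]{RiRo1}: the $(-+)$-component of the \emph{same} class $\kappa(g_{\alpha},g_{1/\alpha}^*)$ is governed by the Bloch--Kato logarithm, and the proof of Theorem \ref{central} (together with Proposition \ref{step-c} and its corollary) already expresses $L_p^{g_\alpha}(g,g^*,1)$ as $\Omega\cdot\log_p(u_{\alpha\otimes\bar\beta})$; cross-referencing the two projections of one and the same global class against the known relations $\log_p(u_1), \log_p(v_1), \log_p(u_{\alpha\otimes\bar\beta}), \log_p(v_{\alpha\otimes\bar\beta})$ forces $b$ to be $\log_p(u_1)/\log_p(u_{\alpha\otimes\bar\beta})$ up to $L^\times$. (Concretely, one writes $\kappa(g_\alpha,g_{1/\alpha}^*)$ as a multiple of $\log_p(u_{\alpha\otimes\bar\beta})u_1 - \log_p(u_1)u_{\alpha\otimes\bar\beta}$-type combination using the vanishing of the $(--)$-projection and the global reciprocity of \cite{KLZ}.) Feeding this back gives $L_p^{\Eis}(g_{\alpha},g_{1/\alpha}^*,0) = \mathcal L_{g_\alpha}\cdot b = \dfrac{\mathcal L_{g_\alpha}}{\log_p(u_{\alpha\otimes\bar\beta})}\cdot\log_p(u_1)$ modulo $L^\times$, as claimed.

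The main obstacle I anticipate is the bookkeeping of periods and Euler factors: the various Perrin--Riou maps $\langle \mathcal L^{++}_{gg^*},\omega_g\otimes\omega_{g^*}\rangle$ and $\langle\mathcal L^{-+}_{gg^*},\eta_g\otimes\omega_{g^*}\rangle$ are normalized with different differentials (the holomorphic $\omega$ versus the class $\eta$), so the period ratio $\mathcal L_{g_\alpha}$ genuinely enters and cannot be absorbed into $L^\times$ — this is exactly what distinguishes this formula from Theorem \ref{central}, where only the $\eta$--$\omega$ pairing appeared and its period was rational. Making sure that the $(1-p^{-1})$ factor from Lemma \ref{ord0}, the fudge factors $\mathcal A(g,g^*)$ from \cite[Theorem 10.2.2]{KLZ}, and the interpolation constants are all in $L^\times$ (and not hiding another $p$-adic transcendental) is the delicate point; but since we work only modulo $L^\times$ and all these factors are visibly nonzero rationals or powers of $p$ times rationals, this is a matter of careful tracking rather than a conceptual difficulty.
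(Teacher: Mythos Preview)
Your first step---invoking Lemma \ref{ord0} to identify the specialization at $s=0$ with the order map paired against $\omega_g\otimes\omega_{g^*}$---is correct and is exactly what the paper does. The difficulty lies entirely in your second step, and there are two genuine problems.

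First, your eigenspace identification is wrong. For the pair $(g_\alpha, g_{1/\alpha}^*)$ one has $\alpha_g=\alpha$ and $\alpha_h=1/\alpha$, so the Frobenius eigenvalue on $V_{gg^*}^{++}$ is $\alpha_g\alpha_h=\alpha\cdot(1/\alpha)=1$, not $\alpha/\beta$. Thus the $(++)$-component is the \emph{trivial} line $L$ in the decomposition $\ad^0(g)=L\oplus L^{\alpha\otimes\bar\beta}\oplus L^{\beta\otimes\bar\alpha}$, not the $L^{\alpha\otimes\bar\beta}$-line. This matters: had you actually projected to the $\alpha\otimes\bar\beta$-line and applied $\ord_p$, you would get zero, since $\ord_p$ is Frobenius-equivariant with values in the trivial representation, so $\ord_p(v_{\alpha\otimes\bar\beta})=0$. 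The computation only works because on the trivial line one has $\ord_p(u_1)=0$ and $\ord_p(v_1)=1$, which correctly extracts the coefficient of $v$.

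Second, your route to determining the coefficient $b$ is muddled. Theorem \ref{central}, Proposition \ref{step-c} and its corollary all concern the \emph{derived} class coming from the vanishing of $\kappa(g_\alpha,g_{1/\beta}^*)$; they say nothing directly about the nonvanishing class $\kappa(g_\alpha,g_{1/\alpha}^*)$ needed here. The period $\Omega$ appearing there is attached to the derived class and has no a priori relation to the coefficients of $\kappa(g_\alpha,g_{1/\alpha}^*)$. What the paper actually does is invoke the explicit formula from \cite[Section 4]{RiRo2}, which gives
\[
\kappa(g_\alpha,g_{1/\alpha}^*)=\frac{1}{\Xi_{g_\alpha}\Omega_{g_{1/\alpha}^*}}\cdot\frac{\log_p(u_1)\,v-\log_p(v_1)\,u}{\log_p(u_{\alpha\otimes\bar\beta})}\pmod{L^\times},
\]
already established by the local analysis in that earlier work (essentially the vanishing of the $(--)$-projection for \emph{this} class, not cross-referencing the other one). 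Applying $\ord_p$ on the trivial line and pairing with $\omega_g\otimes\omega_{g^*}$---which introduces the period ratio $\mathcal L_{g_\alpha}$---then yields the stated formula in one line.
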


\begin{proof}
When $s=0$, the denominator of the Perrin-Riou map $\mathcal L_{gg^*}^{++}$ vanishes and we are in the setting discussed before. Then, the Perrin-Riou map is given by the order followed by the pairing with the canonical differentials, as in \eqref{nu0}. Since according to the results of \cite[Section 4]{RiRo2} \[ \kappa(g_{\alpha},g_{1/\alpha}^*) = \frac{1}{\Xi_{g_{\alpha}} \Omega_{g_{1/\alpha}^*}}  \frac{\log_p(u_1) \cdot v - \log_p(v_1) \cdot u}{\log_p(u_{\alpha \otimes \bar \beta})} \pmod{L^{\times}}, \] the image of $\kappa_p(g_{\alpha},g_{1/\alpha}^*)$ under the map \eqref{nu0} agrees with \[ \log_p(u_1) \cdot \frac{\mathcal L_{g_{\alpha}}}{\log_p(u_{\alpha \otimes \bar \beta})} \pmod{L^{\times}}. \]
\end{proof}

Further, recall that according to \cite[Conjecture 2.3]{DR2.5}, we expect that $\mathcal L_{g_{\alpha}}$ must agree with $\log_p(u_{\alpha \otimes \bar \beta})$ up to multiplication by $L^{\times}$, and this would give just $\log_p(u_1)$ in the previous formula.

As a final comment, observe that the class $\kappa(g_{\alpha},g_{1/\beta}^*)$ vanishes, while neither the corresponding numerator nor the denominator of the Perrin-Riou map do. Hence, we expect the special value to be zero. However, it would be licit to take the derivative of both the class and the $p$-adic $L$-function.


\begin{thebibliography}{Ri}

\bibitem[BeDi]{BeDi}
J.~Bellaiche, M.~Dimitrov, {\em On the eigencurve at classical weight one points}, {\em Duke Math. J.} {\bf 165} (2016), 245--266.

\bibitem[Ben1]{Ben}
D.~Benois. {\em Trivial zeros of p-adic L-functions at near central points}, {\em J. Inst. Math. Jussieu} {\bf 13}, no. 3 (2014), 561--598.

\bibitem[Ben2]{Ben2}
D.~Benois. {\em On extra-zeros and p-adic L-functions: the crystalline case}, {\em ``Iwasawa theory 2012. State of the Art and Recent Advances", Contributions in Mathematic and Computational Science} {\bf 7}, Springer (2015).

\bibitem[BD]{BD}
M.~Bertolini, H.~Darmon. {\em Hida families and rational points on elliptic curves}, {\em Inventiones mathematicae} {\bf 168} (2007), 371--431.

\bibitem[BSV1]{BSV}
M.~Bertolini, M.~Seveso, and R.~Venerucci.
{\em Reciprocity laws for balanced diagonal classes}. Preprint.

\bibitem[Buy1]{Buy1}
K.~B\"uy\"ukboduk. {\em } {\em Height pairings, exceptional zeros and Rubin's formula}, {\bf 87} (2012), no. 1, 71--111.

\bibitem[Buy2]{Buy2}
K.~B\"uy\"ukboduk. {\em On Nekovar's heights, exceptional zeros and a conjecture of Mazur-Tate-Teitelbaum} {\em Int. Math. Res. Not}, (2016), 2197--2237.

\bibitem[DDP]{DDP}
H.~Darmon, S.~Dasgupta and R.~Pollack. {\em
Hilbert Modular Forms and the Gross--Stark Conjecture,} {\em Ann. of Math}, {\bf 174} (2011) no. 1, 439--484..

\bibitem[DLR]{DLR}
H.~Darmon, A.~Lauder, and V.~Rotger.
{\em Gross-Stark units and $p$-adic iterated integrals attached to modular forms of weight one}, {\em Ann. Math. Qu\'ebec}, volume dedicated to Prof. Glenn Stevens on his 60th birthday, {\bf 40} (2016), 325--354.

\bibitem[DLR2]{DLR2}
H.~Darmon, A.~Lauder, and V.~Rotger.
{\em First order $p$-adic deformations of weight one newforms}, in ``$L$-functions and automorphic forms", {\em Contr. in Math. and Comp. Sc. {\bf 12}}.

\bibitem[DR1]{DR1}
H.~Darmon and V.~Rotger. {\em Diagonal cycles and Euler systems I: a $p$-adic Gross-Zagier formula}, Annales Scientifiques de l'Ecole Normale Sup\'erieure, {\bf 47} (2014), no. 4, 779--832.

\bibitem[DR2]{DR2.5}
H.~Darmon and V.~Rotger. {\em Elliptic curves of rank two and generalised Kato classes}, {\em Research in Math. Sciences} {\bf 3:27} (2016).

\bibitem[Gr]{Gr}
R.~Greenberg. {\em Trivial zeros of $p$-adic $L$-functions, $p$-adic monodromy and the Birch and Swinnerton-Dyer conjecture} {\em Contemp. Math}, (1991), 149--174.

\bibitem[GS]{GS}
R.~Greenberg and G.~Stevens, {\em $p$-adic $L$-functions and $p$-adic periods of modular forms}, {\em Invent. Math.} {\bf 111} (1993), no. 2, 407--447.

\bibitem[HV]{HV}
M.~Harris and A.~Venkatesh. {\em Derived Hecke algebra for weight one forms}, {\em Exp. Math.} {\bf 28} (2019), no. 3, 342--361.

\bibitem[Hi1]{Hi1}
H.~Hida, {\em A $p$-adic measure attached to the zeta functions associated with two elliptic modular forms I}, {\em Invent. Math.} {\bf 79} (1985), no. 1, 159--195.

\bibitem[Hi2]{Hi2}
H.~Hida, {\em A $p$-adic measure attached to the zeta functions associated with two elliptic modular forms II}, {\em Ann. Inst. Fourier (Grenoble)} {\bf 38} (1988), no. 3, 1--83.

\bibitem[KLZ1]{KLZ}
G.~Kings, D.~Loeffler, and S.L.~Zerbes. {\em Rankin-Eisenstein classes and explicit reciprocity laws}, {\em Cambridge J. Math} {\bf 5} (2017), no. 1, 1--122.

\bibitem[KLZ2]{KLZ1}
G.~Kings, D.~Loeffler, and S.L.~Zerbes. {\em Rankin-Eisenstein classes and explicit reciprocity laws}, {\em Amer. J. Math} {\bf 142} (2020), no. 1,79--138.

\bibitem[Ko]{Ko}
S.~Kobayashi, {\em An elementary proof of the Mazur--Tate--Teitelbaum conjecture for elliptic curves}, {\em Documenta Math.}, Extra Volume (2006), 567--575.

\bibitem[LVZ]{LVZ}
D.~Loeffler, O.~Venjakob and S.L.~Zerbes. {\em Local epsilon-isomorphisms}, {\em Kyoto J. Math}, {\bf 55} (2015), no. 1, 63--127.

\bibitem[LZ]{LZ}
D.~Loeffler and S.L.~Zerbes. {\em
Iwasawa theory for the symmetric square of a modular form}, {\em J. Reine Angew. Math.} {\bf 752} (2019), 179--210.

\bibitem[MTT]{MTT}
B.~Mazur, J.~Tate, and J.~Teitelbaum. {\em On $p$-adic analogs of the conjectures of Birch and Swinertonn-Dyer}, {\em Invent. Math.} {\bf 84} (1986), 1-48.

\bibitem[RiRo1]{RiRo1}
O.~Rivero and V.~Rotger. {\em Derived Beilinson--Flach elements and the arithmetic of the adjoint of a modular form}, to appear in {\em J. Eur. Math. Soc.}

\bibitem[RiRo2]{RiRo2}
O.~Rivero and V.~Rotger, {\em Beilinson-Flach elements and Stark conjectures}, {\em Forum Math.} {\bf 31} (2019), no. 6, 1517--1532.

\bibitem[RRV]{RRV}
M.~Roset, V.~Rotger, and V.~Vatsal. {\em On the L-invariant of the adjoint of a weight one modular forms}, to appear in {\em J. London Math. Soc.}

\bibitem[Ven]{Ven}
R.~Venerucci. {\em Exceptional zero formulae and a conjecture of Perrin-Riou}, {\em Invent. Math.}, {\bf 203} (2016), 923-972.

\end{thebibliography}
\end{document}